\newcommand{\bel}[1]{\begin{equation}\label{#1}}
\newcommand{\be}{\begin{equation}}
\newcommand{\ba}{\begin{eqnarray}}
\newcommand{\ea}{\end{eqnarray}}
\newcommand{\qe}{\end{equation}}
\newcommand{\R}{{\mathbb R}}
\newcommand{\CDE}{\mathrm{CDE}}
\newcommand{\CCD}{\mathrm{CD}}
\newcommand{\de}{{\delta}}
\newcommand{\Hmm}[1]{\leavevmode{\marginpar{\tiny%
$\hbox to 0mm{\hspace*{-0.5mm}$\leftarrow$\hss}%
\vcenter{\vrule depth 0.1mm height 0.1mm width \the\marginparwidth}%
\hbox to
0mm{\hss$\rightarrow$\hspace*{-0.5mm}}$\\\relax\raggedright #1}}}
\theoremstyle{theorem}
\newtheorem{thm}{Theorem}[section]
\newtheorem{prop}[thm]{Proposition}
\theoremstyle{example}
\newtheorem{example}[thm]{Example}
\theoremstyle{corollary}
\theoremstyle{lemma}
\newtheorem{lemma}[thm]{Lemma}
\theoremstyle{definition}
\newtheorem{defi}[thm]{Definition}
\theoremstyle{proof}
\theoremstyle{remark}
\newtheorem{rem}[thm]{Remark}
\begin{document}

\title[Stochastic completeness for graphs]{Stochastic completeness for graphs with curvature dimension conditions}
\author{Bobo Hua}
\email{bobohua@fudan.edu.cn}
\address{School of Mathematical Sciences, LMNS, Fudan University, Shanghai 200433, China; Shanghai Center for Mathematical Sciences, Fudan University, Shanghai 200433, China
}

\author{Yong Lin}
\email{linyong01@ruc.edu.cn}
\address{Department of Mathematics,Information School,
Renmin University of China,
Beijing 100872, China}

\begin{abstract}We prove pointwise gradient bounds for heat semigroups associated to general (possibly unbounded) Laplacians on infinite graphs satisfying the curvature dimension condition $CD(K,\infty)$. Using gradient bounds, we show stochastic completeness for graphs satisfying the curvature dimension condition.
\end{abstract}
\maketitle
\section{Introduction and main results}\label{s:introduction}
Let $M$ be a complete, noncompact Riemannian manifold without boundary. It is called stochastically complete if
\begin{equation}\label{d:stochastic}\int_{M}p_t(x,y)d\mathrm{vol}(y)=1,\quad \quad \forall\ t>0, x\in M,\end{equation} where $p_t(\cdot,\cdot)$ is the (minimal) heat kernel on $M.$
Yau \cite {Yau78} first proved that any complete Riemannian manifold with a uniform lower bound of Ricci curvature is stochastically complete. Karp and Li \cite{KarpLi} showed the stochastic completeness in terms of the following volume growth property:  \begin{equation}\label{e:volume criterion}\mathrm{vol}(B_r(x))\leq Ce^{cr^2},\quad\quad\mathrm{some}\ x\in M, \forall\ r> 0,\end{equation} where $\mathrm{vol}(B_r(x))$ is the volume of the geodesic ball of radius $r$ and centered at $x$. Varopoulos \cite{Varopoulos}, Li \cite{Li84} and Hsu \cite{Hsu89} extended Yau's result to Riemannian manifolds with general conditions on Ricci curvature. So far, the optimal volume growth condition for stochastic completeness was given by Grigor'yan \cite{Grigoryan86}. We refer to \cite{Grigoryan99} for the literature on stochastic completeness of
Riemannian manifolds. These results have been generalized to a quite general setting, namely, regular strongly local Dirichlet forms by Sturm \cite{Sturm94}.

Compared to local operators, graphs (discrete metric measure spaces) are nonlocal in nature and can be regarded as regular Dirichlet forms associated to jump processes. A general Markov semigroup is called a diffusion semigroup if chain rules hold for the associated infinitesimal generator, see Bakry, Gentil and Ledoux \cite[Definition~1.11.1]{BakryGentilLedoux}, which is a property related to the locality of the generator. As a common point of view to many graph analysts, the absence of chain rules for discrete Laplacians is the main difficulty for the analysis on graphs. This causes many problems and various interesting phenomena emerge on graphs. A graph is called \emph{stochastically complete} (or conservative) if an equation similar to \eqref{d:stochastic} holds for the continuous time heat kernel, see Definition \ref{d:stochastic complete graph}. The stochastic completeness of graphs has been thoroughly studied by many authors \cite{DodziukMathai06,Dodziuk06,Wojciechowski08,Weber10,Wojciechowski1,Huang10,KellerLenz10,Wojciechowski2,KellerLenz12,GrigoryanHuangMasamune12,MasamuneUemuraWang12,KellerLenzWojciechowski}. In particular, the volume criterion \eqref{e:volume criterion} with respect to the graph distance is no longer true for unbounded Laplacians on graphs, see \cite{Wojciechowski2}. This can be circumvented by using intrinsic metrics introduced by Frank, Lenz and Wingert \cite{FrankLenzWingert12}, see e.g. \cite{GrigoryanHuangMasamune12,Folz14,Huang14}.

Gradient bounds of heat semigroups can be used to prove stochastic completeness. Nowadays, the so-called $\Gamma$-calculus has been well developed in the framework of general Markov semigroups where $\Gamma$ is called the ``carr\'e du champ" operator, see \cite[Definition~1.4.2]{BakryGentilLedoux}. Given a smooth function $f$ on a Riemannian manifold, $\Gamma(f)$ stands for $|\nabla f|^2,$ see Section~\ref{s:graphs} for the definition on graphs. Heuristically, on a Riemannian manifold $M$ if one can show the gradient bound for the heat semigroup
\begin{equation}\label{e:gradient bounds}\Gamma(P_t f)\leq P_t(\Gamma (f)),\quad\quad \forall f\in C_0^{\infty}(M),\end{equation} where $P_t=e^{t\Delta_M}$ is the heat semigroup induced by the Laplace-Beltrami operator $\Delta_M$ and $C_0^\infty(M)$ is the space of compactly supported smooth functions on $M$, then the stochastic completeness follows from approximating the constant function $\mathds{1}$ by compactly supported smooth functions. This approach has been systematically generalized to Markov diffusion semigroups, i.e. local operators, see \cite{BakryGentilLedoux}. In this paper, we closely follow this strategy and prove the stochastic completeness for the non-diffusion case, i.e. graphs. This shows that the gradient-bound approach works even in nonlocal setting.

We introduce the setting of graphs and refer to Section~\ref{s:graphs} for details.
Let $(V,E)$ be a connected, undirected, (combinatorial) infinite graph with the set of vertices $V$ and the set of edges $E.$  We say $x,y\in V$ are neighbors, denoted by $x\sim y,$ if $(x,y)\in E.$ The graph is called \emph{locally finite} if each vertex has finitely many neighbors. In this paper, we only consider locally finite graphs. We assign a weight $m$ to each vertex, $m: V\to (0,\infty),$ and a weight $\mu$ to each edge, $$\mu:E\to (0,\infty), E\ni (x,y)\mapsto \mu_{xy},$$ and refer to the quadruple $G=(V,E,m,\mu)$ as a \emph{weighted graph}. We denote by $$C_0(V):=\{f:V\to\R|\ \{x\in V| f(x)\neq 0\}\ \mathrm{is\ of\ finite\ cardinality}\}$$ the set of finitely supported functions on $V$ and by $\ell^p(V,m),$ $p\in [1,\infty],$ the $\ell^p$ spaces of functions on $V$ with respect to the measure $m.$

For any weighted graph $G=(V,E,m,\mu)$, it associates with a Dirichlet form with respect to the Hilbert space $\ell^2(V,m)$ corresponding to the Dirichlet boundary condition,
\begin{eqnarray}\label{e:dirichlet form}Q^{(D)}:&&D(Q^{(D)})\times D(Q^{(D)})\to \R \nonumber\\ && (f,g)\mapsto \frac12\sum_{x\sim y}\mu_{xy}(f(y)-f(x))(g(y)-g(x)),\end{eqnarray} where the form domain $D(Q^{(D)})$ is defined as the completion of $C_0(V)$ under the norm $\|\cdot\|_Q$ given by $$\|f\|^2_{Q}=\|f\|_{\ell^2(V,m)}^2+\frac12\sum_{x\sim y}\mu_{xy}(f(y)-f(x))^2, \ \ \forall f\in C_0(V),$$ see Keller and Lenz \cite{KellerLenz12}. For the Dirichlet form $Q^{(D)},$ its (infinitesimal) generator, denoted by $L,$ is called the (discrete) Laplacian. Here we adopt the sign convention such that $-L$ is a nonnegative operator. The associated $C_0$-semigroup is denoted by $P_t=e^{tL}:\ell^2(V,m)\to \ell^2(V,m).$
For locally finite graphs, the generator $L$ acts as
\begin{align*}
    L f(x)=\frac{1}{m(x)}\sum_{y\sim x}\mu_{xy}(f(y)-f(x)), \ \ \forall f\in C_0(V),
\end{align*} see \cite[Theorem~6~and~9]{KellerLenz12}.
Obviously, the measure $m$ plays an essential role in the definition of the Laplacian. Given the weight $\mu$ on $E,$ typical choices of $m$ of particular interest are: \begin{itemize}\item $m(x)=\sum_{y\sim x}\mu_{xy}$ for any $x\in V$ and the associated Laplacian is called the normalized Laplacian.
\item $m(x)=1$ for any $x\in V$ and the Laplacian is called combinatorial (or physical) Laplacian.
\end{itemize} Note that normalized Laplacians are bounded operators, so that these graphs are always stochastically complete, see Dodziuk \cite{Dodziuk06} or Keller and Lenz \cite{KellerLenz10}. Thus, the only interesting cases are combinatorial Laplacians, or more general unbounded Laplacians.

Following the strategy in \cite{BakryGentilLedoux}, to show stochastic completeness for the semigroups associated to unbounded Laplacians on graphs, it suffices to prove the gradient bounds as in
\eqref{e:gradient bounds}. For that purpose, we first introduce a completeness condition for infinite graphs:  A graph $G=(V,E,m,\mu)$ is called \emph{complete} if there exists a nondecreasing sequence of finitely supported functions $\{\eta_k\}_{k=1}^\infty$ such that \begin{equation}\label{d:complete}\lim_{k\to\infty}\eta_k=\mathds{1}\ \mathrm{and}\ \ \Gamma(\eta_k)\leq \frac{1}{k},\end{equation} where $\mathds{1}$ is the constant function $1$ on $V.$ Note that the measure $m$ plays a role in the definition of $\Gamma,$ see Definition~\ref{d:carre du}, so that it is essential to the completeness of a weighted graph. This condition was defined for Markov diffusion semigroups in \cite[Definition~3.3.9]{BakryGentilLedoux}; here we adapt it to graphs. As is well-known, this condition is equivalent to the geodesic completeness for Riemannian manifolds, see \cite{Strichartz83}. For the discrete setting, this condition is satisfied for a large class of graphs which possess intrinsic metrics, see Theorem \ref{thm:complete intrinsic}.

For gradient bounds \eqref{e:gradient bounds}, besides completeness we need curvature dimension conditions. For Markov diffusion semigroups, the curvature dimension conditions are defined via the $\Gamma$ operator and the iterated operator denoted by $\Gamma_2,$ see \cite[eq.~1.16.1]{BakryGentilLedoux}. This approach, using curvature dimension conditions to obtain gradient bounds, was initiated in Bakry and \'Emery \cite{BakryEmery85}. The curvature dimension condition on graphs, the non-diffusion case, was first introduced by Lin and Yau \cite{LinYau10} which serves as a combination of a lower bound of Ricci curvature and an upper bound of the dimension, see Definition \ref{d:curvature dimension} for an infinite dimensional version $\CCD (K,\infty).$ For bounded Laplacians on graphs, Bauer et al. \cite{Bauer13} introduced an involved curvature dimension condition, the so-called $\CDE(K,n)$ condition, to prove the Li-Yau gradient estimate for heat semigroups. Also restricted to bounded Laplacians, Lin and Liu \cite{LinLiu} proved the equivalence between the $\CCD(K,\infty)$ condition and the gradient bounds \eqref{e:gradient bounds} for heat semigroups, see Liu and Peyerimhoff \cite{LiuP14} for finite graphs. In this paper, under some mild assumptions, we prove the gradient bounds for unbounded Laplacians on graphs.


\begin{thm}[see Theorem~\ref{thm:main}]\label{thm:main theorem1}
  Let $G=(V,E,m,\mu)$ be a complete graph and $m$ be non-degenerate, i.e. $\inf_{x\in V}m(x)>0.$ Then the following are equivalent:
  \begin{enumerate}[(a)]
    \item $G$ satisfies $\CCD(K,\infty).$
    \item For any $f\in C_0(V),$
    $$\Gamma(P_t f)\leq e^{-2Kt}P_t(\Gamma (f)).$$
    \end{enumerate}
\end{thm}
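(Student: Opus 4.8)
The plan is to establish the equivalence by the standard Bakry–Émery interpolation argument, adapted carefully to the nonlocal discrete setting where the completeness condition \eqref{d:complete} replaces the role played by geodesic completeness and compactly supported cut-offs in the diffusion theory. For the implication $(a)\Rightarrow(b)$, fix $f\in C_0(V)$, fix $t>0$, and consider the interpolation function $\phi(s) := P_s\big(\Gamma(P_{t-s}f)\big)$ for $s\in[0,t]$. Formally one differentiates: $\phi'(s) = P_s\big(L\,\Gamma(P_{t-s}f) - 2\Gamma(P_{t-s}f, L P_{t-s}f)\big) = 2 P_s\big(\Gamma_2(P_{t-s}f)\big) \geq 2K\, P_s\big(\Gamma(P_{t-s}f)\big) = 2K\phi(s)$, using the $\CCD(K,\infty)$ inequality $\Gamma_2(g)\geq K\Gamma(g)$ applied to $g = P_{t-s}f$. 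Grönwall's inequality then gives $\phi(0)\leq e^{-2Kt}\phi(t)$, which is exactly $\Gamma(P_t f)\leq e^{-2Kt}P_t(\Gamma(f))$.

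The substantive work — and the main obstacle — is justifying each step of this formal computation when $L$ is unbounded and $P_{t-s}f$ need not be finitely supported, so that $\CCD(K,\infty)$ (which is stated pointwise, or for finitely supported functions) does not apply directly to $g=P_{t-s}f$, and the semigroup $P_s$ need not preserve the relevant function spaces nor commute naively with $\Gamma$ and $L$ on non-$\ell^2$ objects like $\Gamma(P_{t-s}f)$. This is precisely where the completeness hypothesis enters: one introduces the cut-offs $\eta_k$ from \eqref{d:complete}, works with $\eta_k$-localized versions of $\phi$, uses $\Gamma(\eta_k)\leq 1/k\to 0$ together with non-degeneracy of $m$ (which controls $\ell^\infty$-to-$\ell^2$-type estimates and ensures $P_t$ maps $C_0(V)$ into a space where gradients are summable) to show the boundary and error terms produced by the cut-offs vanish as $k\to\infty$. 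I expect this to require: (i) showing $P_{t-s}f$ and $LP_{t-s}f$ decay fast enough (in $\ell^2(V,m)$, hence pointwise since $\inf m>0$) that $\Gamma(P_{t-s}f)\in\ell^1$; (ii) an a priori regularity/semigroup-smoothing statement giving $s\mapsto P_{t-s}f$ differentiable into a suitable space; and (iii) a density argument extending $\CCD(K,\infty)$ from $C_0(V)$ to $P_{t-s}f$ via the $\eta_k$-truncations $\eta_k \cdot P_{t-s}f \in C_0(V)$, controlling the commutator $[L,\eta_k]$ and the term $\Gamma(\eta_k, \cdot)$. These approximation lemmas are presumably the content of the sections between this statement and Theorem~\ref{thm:main}.

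For the converse $(b)\Rightarrow(a)$, I would differentiate the gradient bound at $t=0$. Write $\Psi(t) := e^{-2Kt}P_t(\Gamma(f)) - \Gamma(P_t f)\geq 0$ for $f\in C_0(V)$; then $\Psi(0)=0$, so $\Psi'(0^+)\geq 0$. Computing the derivative at zero, $\Psi'(0) = -2K\,\Gamma(f) + L\Gamma(f) - 2\Gamma(f, Lf) = 2\Gamma_2(f) - 2K\Gamma(f)$, where all operators are applied to the finitely supported $f$ so that everything is a finite sum and the differentiation is elementary (the semigroup $P_t$ restricted to the span of $\{P_s f\}$ near $s=0$ is differentiable in $\ell^2$, and pointwise convergence follows from $\inf m>0$). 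Hence $\Gamma_2(f)\geq K\Gamma(f)$ pointwise for all $f\in C_0(V)$, which is the $\CCD(K,\infty)$ condition. The only mild care needed here is to legitimately pass from the $\ell^2$-derivative of $t\mapsto P_t g$ to the pointwise derivative of $t\mapsto (P_t g)(x)$ and of $t\mapsto \Gamma(P_t f)(x)$ at a fixed vertex $x$; local finiteness of the graph plus non-degeneracy of $m$ make this routine, since near $s=0$ only finitely many vertices carry non-negligible mass of $P_s f$ in the relevant estimates, or more simply since $\ell^2(V,m)$-convergence with $\inf m>0$ implies pointwise convergence.
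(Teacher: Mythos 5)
Your $(b)\Rightarrow(a)$ direction is correct and is essentially the paper's argument: one differentiates $F(t)=e^{-2Kt}P_t(\Gamma(f))-\Gamma(P_tf)\ge 0$ at $t=0$, using local finiteness and $\inf_x m(x)>0$ to pass from $\ell^2$-derivatives to pointwise ones. The skeleton of your $(a)\Rightarrow(b)$ direction --- the interpolation $\phi(s)=P_s(\Gamma(P_{t-s}f))$ plus Gr\"onwall --- is also the right one. However, the plan you give for making it rigorous has a genuine gap, and it is precisely the gap the paper flags in Remark~\ref{r:difficulty}. The obstruction is \emph{not} that $\CCD(K,\infty)$ fails to apply to $g=P_{t-s}f$: the condition is pointwise and, by local finiteness, $\Gamma_2(g)(x)\ge K\Gamma(g)(x)$ holds for \emph{every} function $g$ on $V$, so your item (iii) (a density argument extending $\CCD$ from $C_0(V)$ to $P_{t-s}f$) addresses a non-issue. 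The real obstruction is the identity $\phi'(s)=P_s\bigl(\Delta\Gamma(P_{t-s}f)-2\Gamma(P_{t-s}f,\Delta P_{t-s}f)\bigr)$: to pull $\Delta$ inside $P_s$ one needs $\Gamma(P_{t-s}f)\in D(L)$, which is unknown for unbounded Laplacians, and localizing $\phi$ by the cut-offs $\eta_k$ does not by itself supply this.

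The paper resolves this by a mechanism absent from your proposal. It dualizes: for $0\le\zeta\in C_0(V)$ it sets $G(s)=\sum_x\Gamma(P_{t-s}f)(x)\,P_s\zeta(x)\,m(x)$, differentiates under the sum (justified by the uniform $\ell^1$ bounds of Lemma~\ref{lem:max time estimate}), and replaces the illegal commutation by Green's formula, $\sum_x\Delta(P_s\zeta)\,\Gamma(P_{t-s}f)\,m=-Q\bigl(\Gamma(P_{t-s}f),P_s\zeta\bigr)$, which only requires $\Gamma(P_{t-s}f)$ to lie in the \emph{form} domain $D(Q)$ rather than in $D(L)$. That membership is the content of Theorem~\ref{thm:w12}, proved via a discrete Caccioppoli inequality (Lemma~\ref{l:Caccippoli}) applied to the heat-equation subsolution $\Gamma(P_tf)$, together with the non-degeneracy of $m$; the weak form of the curvature condition, inequality \eqref{e:Gs3}, is then verified for $h\in C_0(V)$ and extended to $h\in D(Q)$ by density, after which one takes $\zeta=\delta_y$ to recover the pointwise bound. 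To complete your proof you would need either this Caccioppoli/weak-formulation machinery or a substitute that controls $Q(\Gamma(P_tf))$; asserting that ``the error terms produced by the cut-offs vanish'' is not sufficient, because the quantity that must be controlled is $Q\bigl(\Gamma(P_{t-s}f),P_s\zeta\bigr)$, not a commutator with $\eta_k$.
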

Since it is not clear what volume growth is possible under the $\CCD(K,\infty)$ condition, our result cannot be derived from the criteria involving volume growth conditions. For unbounded Laplacians, the standard differential techniques for bounded Laplacians as in \cite{LiuP14,LinLiu} fail due to essential difficulties in the summability of solutions to heat equations. For instance, we don't know whether $\Gamma(P_t f)$ lies in the form domain (or, more strongly, in the domain of the generator), see Remark~\ref{r:difficulty}. In order to overcome these difficulties, we add a mild assumption on the measure $m,$ i.e. the non-degenerancy of the measure, and critically utilize techniques from partial differential equations, see Lemma~\ref{l:Caccippoli} for the Caccioppoli inequality and Theorem~\ref{thm:w12}. The assumption of the non-degenerancy of the measure $m$ is mild since it is automatically satisfied for any combinatorial Laplacian.

A direct consequence of the gradient bounds is the stochastic completeness for graphs satisfying the $\CCD(K,\infty)$ condition.
\begin{thm}\label{thm:main theorem2} Let $G=(V,E,\mu,m)$ be a complete graph satisfying the $\CCD(K,\infty)$ condition for some $K\in \R.$ Suppose that the measure $m$ is non-degenerate, then $G$ is stochastically complete.
 \end{thm}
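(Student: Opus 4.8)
The plan is to derive stochastic completeness directly from the gradient bound established in Theorem~\ref{thm:main theorem1}, following the classical scheme of Bakry–Gentil–Ledoux adapted to the discrete setting. Recall that a weighted graph is stochastically complete if $P_t\mathds{1}=\mathds{1}$ for all $t>0$, where $P_t\mathds{1}$ is understood via the natural extension of the semigroup to $\ell^\infty(V,m)$ (equivalently, $\sum_y p_t(x,y)m(y)=1$). Since $0\le P_t\mathds{1}\le \mathds{1}$ always holds, it suffices to show $P_t\mathds{1}\ge\mathds{1}$, i.e. that no mass escapes.

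\emph{Step 1: Reduce to a uniform gradient bound on approximations of $\mathds{1}$.} Let $\{\eta_k\}$ be the sequence from the completeness condition \eqref{d:complete}, so $\eta_k\in C_0(V)$, $\eta_k\uparrow\mathds{1}$ pointwise, and $\Gamma(\eta_k)\le 1/k$. By Theorem~\ref{thm:main theorem1} (taking $f=\eta_k$), for every $k$ and every $t>0$,
\[
\Gamma(P_t\eta_k)\le e^{-2Kt}\,P_t(\Gamma(\eta_k))\le \frac{e^{-2Kt}}{k}\,P_t\mathds{1}\le \frac{e^{2|K|t}}{k},
\]
using $P_t\mathds{1}\le\mathds{1}$ and $e^{-2Kt}\le e^{2|K|t}$. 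Hence $\Gamma(P_t\eta_k)\to 0$ uniformly on $V$ as $k\to\infty$, locally uniformly in $t$. Since $m$ is non-degenerate, $\inf_x m(x)>0$, and this translates the vanishing of $\Gamma(P_t\eta_k)=\frac{1}{2m(x)}\sum_{y\sim x}\mu_{xy}(P_t\eta_k(y)-P_t\eta_k(x))^2$ into control of the discrete gradient itself.

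\emph{Step 2: Pass to the limit and identify the limit as $\mathds{1}$.} Fix $t>0$. Because $0\le\eta_k\le\mathds{1}$ and the $\eta_k$ increase to $\mathds{1}$, the functions $u_k:=P_t\eta_k$ are monotone in $k$ (positivity preservation of $P_t$) and bounded by $\mathds{1}$, so $u_k$ converges pointwise to some $u=u_\infty$ with $0\le u\le\mathds{1}$; in fact $u=P_t\mathds{1}$ by monotone convergence applied to the kernel representation $P_t\eta_k(x)=\sum_y p_t(x,y)\eta_k(y)m(y)$. Moreover $\Gamma(u)\le\liminf_k\Gamma(u_k)=0$ pointwise (or use lower semicontinuity of $\Gamma$ under pointwise convergence, which is immediate since each $\Gamma(\cdot)(x)$ depends on finitely many vertices). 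Thus $u$ is $\Gamma$-harmonic in the strong sense that $u(y)=u(x)$ for all $y\sim x$; by connectedness of $(V,E)$, $u\equiv c$ is constant.

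\emph{Step 3: Pin down the constant.} It remains to show $c=1$. One can argue via the $\ell^2$-theory: for fixed $t$, $u_k=P_t\eta_k\to c\,\mathds{1}$ pointwise while $\frac{d}{ds}P_s\eta_k=LP_s\eta_k$; integrating and using that $c$ is constant (hence $Lc=0$) together with a dominated-convergence/diagonal argument shows $c\,\mathds{1}=\lim_k P_t\eta_k$ and $\lim_k P_0\eta_k=\mathds{1}$ are connected by $\frac{d}{dt}(P_t\eta_k)=LP_t\eta_k$, whose limit is $L(c\mathds{1})=0$, forcing $t\mapsto c$ to be the constant value of $\lim_k P_t\eta_k$ at $t=0$, namely $1$. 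Alternatively, and more cleanly, test against a fixed $\phi\in C_0(V)$ with $\phi\ge 0$: $\langle P_t\eta_k,\phi\rangle_m=\langle\eta_k,P_t\phi\rangle_m\to\langle\mathds{1},P_t\phi\rangle_m=\langle\mathds{1},\phi\rangle_m$ since $P_t$ is stochastically complete \emph{on the $\ell^1$ side is not yet known}, so instead use $\langle\eta_k,P_t\phi\rangle_m\to\sum_y(P_t\phi)(y)m(y)$ and compare with $\langle c\mathds{1},\phi\rangle_m=c\sum_y\phi(y)m(y)$; together with $\frac{d}{dt}\sum_y (P_t\phi)(y)m(y)=\sum_y LP_t\phi(y)m(y)=0$ for $\phi\in C_0(V)$ (summation by parts, no boundary term as $P_t\phi\in\ell^2$ and the sum is absolutely convergent), one gets $\sum_y(P_t\phi)(y)m(y)=\sum_y\phi(y)m(y)$, whence $c=1$.

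\emph{The main obstacle} I anticipate is Step~3: making the interchange of limits rigorous and justifying the vanishing of boundary terms in the summation-by-parts for possibly unbounded $L$. The gradient bound of Step~1 is used precisely to guarantee that the limit $u$ is not merely bounded but genuinely constant — without it one only gets $0\le P_t\mathds{1}\le\mathds{1}$, which is the trivial inequality. The non-degeneracy of $m$ enters to convert the $\Gamma$-bound into a pointwise gradient bound with a uniform constant, and the completeness condition provides the cutoff sequence; once $P_t\mathds{1}$ is known to be constant in space, standard semigroup identities (or the minimality/conservativeness criterion of Keller–Lenz) identify the constant as $1$, yielding $P_t\mathds{1}=\mathds{1}$ and hence stochastic completeness.
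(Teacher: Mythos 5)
Your Steps 1 and 2 are exactly the paper's proof: apply the gradient bound of Theorem~\ref{thm:main theorem1} to the cutoffs $\eta_k$ from \eqref{d:complete} to get $\Gamma(P_t\eta_k)\leq e^{-2Kt}/k$, pass to the pointwise limit using local finiteness to conclude $\Gamma(P_t\mathds{1})=0$, and deduce from connectedness that $P_t\mathds{1}$ is constant in space. For Step 3 the paper also takes your first route: once $P_t\mathds{1}=c(t)\mathds{1}$ is spatially constant, $\Delta P_t\mathds{1}=0$, hence $\frac{d}{dt}P_t\mathds{1}=0$ and $c(t)=c(0)=1$.

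One caution about your ``cleaner'' alternative in Step 3: it is circular. Since $\sum_y(P_t\phi)(y)m(y)=\langle \phi, P_t\mathds{1}\rangle$, the assertion that this quantity is conserved in $t$ is precisely stochastic completeness tested against $\phi$. The step where you claim $\sum_x \Delta(P_t\phi)(x)m(x)=0$ ``with no boundary term'' is exactly where mass escapes for a stochastically incomplete graph with unbounded Laplacian: the double sum $\sum_{x,y}\mu_{xy}\nabla_{xy}(P_t\phi)$ need not converge absolutely (finiteness of $Q(P_t\phi)$ only gives square-summability of $\sqrt{\mu_{xy}}\,\nabla_{xy}(P_t\phi)$, and $\sum_{x,y}\mu_{xy}$ may be infinite), so the formal antisymmetry cancellation is not justified — indeed it fails in general. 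Discard that variant and keep the first one, which is what the paper does.
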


The paper is organized as follows: In next section, we set up basic notations of weighted graphs. The $\Gamma$-calculus is introduced to define curvature dimension conditions. We define a new concept on the completeness of a graph and prove the completeness under the assumptions involving intrinsic metrics on graphs. In Section~\ref{s:Caccioppoli}, we adopt some PDE techniques to prove a (discrete) Caccioppoli inequality for Poisson's equations. In section~\ref{s:stochastic completeness}, we prove our main results: the
equivalence of curvature dimension conditions and the gradient bounds for heat semigroups on complete graphs, Theorem~\ref{thm:main theorem1}, and the stochastic completeness for graphs satisfying the curvature dimension condition, Theorem~\ref{thm:main theorem2}.

\section{Graphs}\label{s:graphs}
\subsection{Weighted graphs}
Let $(V,E)$ be a (finite or infinite) undirected graph with the set of vertices $V$ and the set of edges $E$ where $E$ is a symmetric subset of $V\times V.$ Two vertices $x,y$ are called neighbors if $(x,y)\in E,$ in this case denoted by $x\sim y.$ At a vertex $x,$ if $(x,x)\in E,$ we say there is a self-loop at $x.$ In this paper, we do allow self-loops for graphs. A graph $(V,E)$ is called connected if for any $x,y\in V$ there is a finite sequence of vertices, $\{x_i\}_{i=0}^n,$ such that $$x=x_0\sim x_1\sim \cdots\sim x_n=y.$$ In this paper, we only consider locally finite connected graphs.

We assign weights, $m$ and $\mu,$ on the set of vertices $V$ and edges $E$ respectively and refer to the quadruple $G=(V,E,m,\mu)$ as a \emph{weighted graph}: Here $\mu:E\to (0,\infty), E\ni (x,y)\mapsto \mu_{xy}$ is symmetric, i.e. $\mu_{xy}=\mu_{yx}$ for any $(x,y)\in E,$  and $m: V\to (0,\infty)$ is a measure on $V$ of full support. For convenience, we extend the function $\mu$ on $E$ to the total set $V\times V,$ $\mu:V\times V\to [0,\infty),$ such that $\mu_{xy}=0$ for any $x\not\sim y.$

For functions defined on $V,$ we denote by $\ell^p(V,m)$ or simply $\ell^p_m,$ the space of $\ell^p$ summable functions w.r.t. the measure $m$ and by $\|\cdot\|_{\ell^p_m}$ the $\ell^p$ norm of a function.
Given a weighted graph $(V,E,m,\mu)$, there is an associated Dirichlet form w.r.t. $\ell^2_m$ corresponding to the Neumann boundary condition, see \cite{HaeselerKellerLenzWojciechowski12},
\begin{eqnarray*}&&Q^{(N)}:D(Q^{(N)})\times D(Q^{(N)})\to \R \nonumber\\&&\ \ \   (f,g)\mapsto Q^{(N)}(f,g):=\frac12\sum_{x, y\in V}\mu_{xy}(f(y)-f(x))(g(y)-g(x)),\end{eqnarray*} where $D(Q^{(N)}):=\{f\in \ell^2_m|\  \sum_{x, y}\mu_{xy}(f(y)-f(x))^2<\infty\}.$ For simplicity, we write $Q^{(N)}(f):=\frac12\sum_{x, y}\mu_{xy}(f(y)-f(x))^2$ for any $f: V\to\R.$ Let $D(Q^{(D)})$ denote the completion of $C_0(V)$ under the norm $\|\cdot\|_Q$ defined by $$\|f\|_{Q}=\sqrt{\|f\|_{\ell^2_m}^2+Q^{(N)}(f)}, \ \ \forall f\in C_0(V).$$ Another Dirichlet form $Q^{(D)},$ defined as the restriction of $Q^{(N)}$ to $D(Q^{(D)}),$ corresponds to the Dirichlet boundary condition, see \eqref{e:dirichlet form} in Section~\ref{s:introduction}.

For the Dirichlet form $Q^{(N)},$ there is a unique self-adjoint
operator $L^{(N)}$ on $\ell^2_m$ with
$$D(Q^{(N)}) = \mathrm{Domain\ of\ definition\ of\ } (-L^{(N)})^{\frac12}$$
and
$$Q^{(N)}(f, g) = \left\langle(-L^{(N)})^{\frac12}f,(-L^{(N)})^{\frac12}g\right\rangle,\quad f,g\in D(Q^{(N)})$$ where $\langle\cdot,\cdot\rangle$ denotes the inner product in $\ell^2_m.$
The operator $L^{(N)}$ is the infinitesimal generator associated to the Dirichlet form $Q^{(N)},$ also called the (Neumann) Laplacian. The associated $C_0$-semigroup on $\ell^2_m$ is denoted by $P_t^{(N)}=e^{tL^{(N)}}.$ For the Dirichlet form $Q^{(D)}$, $L^{(D)}$ and $P_t^{(D)}$ are defined in the same way. In case that the Dirichlet forms corresponding to Neumann and Dirichlet boundary conditions coincide, i.e. $$Q^{(N)}=Q^{(D)},$$
we omit the superscripts and simply write $$Q=Q^{(N)}=Q^{(D)}, \quad L=L^{(N)}=L^{(D)}\quad \mathrm{etc}.$$

The following integration by parts formula is useful in further applications, see \cite[Corollary~1.3.1]{Fukushima}.
\begin{lemma}[Green's formula]\label{l:green formula}
  Let $(V,E,m,\mu)$ be a weighted graph. Then for any $f\in D(Q^{(N)})$ and $g\in D(L^{(N)})$,
  \begin{equation}\label{e:green formula}\sum_{x\in V} f(x)L^{(N)} g(x)m(x)=-Q^{(N)}(f,g).\end{equation} A similar consequence holds for the case of Dirichlet boundary condition.
\end{lemma}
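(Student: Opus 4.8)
The plan is to read Green's formula directly off the identification of $L^{(N)}$ as the generator of the Dirichlet form $Q^{(N)}$ recalled just above; no manipulation of sums over $V$ will be needed. Set $A:=-L^{(N)}$, a nonnegative self-adjoint operator on $\ell^2_m$, and let $A^{1/2}$ denote its nonnegative (hence self-adjoint) square root, so that by the stated correspondence $D(Q^{(N)})=D(A^{1/2})$ and $Q^{(N)}(f,g)=\langle A^{1/2}f,A^{1/2}g\rangle$ for all $f,g\in D(Q^{(N)})$.

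The key functional-analytic input is the standard description of the domain of $A$ in terms of its square root (the spectral theorem, or Kato's first representation theorem, or \cite[Cor.~1.3.1]{Fukushima}): $D(A)=\{g\in D(A^{1/2}):A^{1/2}g\in D(A^{1/2})\}$, and $Ag=A^{1/2}(A^{1/2}g)$ for $g\in D(A)$. Granting this, fix $f\in D(Q^{(N)})$ and $g\in D(L^{(N)})=D(A)$; then both $f$ and $A^{1/2}g$ lie in $D(A^{1/2})$, so self-adjointness of $A^{1/2}$ yields
\begin{align*}
Q^{(N)}(f,g)&=\langle A^{1/2}f,\,A^{1/2}g\rangle=\langle f,\,A^{1/2}(A^{1/2}g)\rangle\\
&=\langle f,\,Ag\rangle=-\langle f,\,L^{(N)}g\rangle=-\sum_{x\in V}f(x)\,L^{(N)}g(x)\,m(x),
\end{align*}
which is exactly \eqref{e:green formula}. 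The same three lines, with $A$ replaced by $-L^{(D)}$, give the Dirichlet-boundary version.

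I expect essentially no obstacle here: everything reduces to the definitions plus the elementary identity $\langle A^{1/2}f,A^{1/2}g\rangle=\langle f,Ag\rangle$ on $D(A^{1/2})\times D(A)$. The one thing I would deliberately avoid is the ``naive'' integration-by-parts proof (expand $Q^{(N)}(f,g)=\tfrac12\sum_{x,y}\mu_{xy}(f(y)-f(x))(g(y)-g(x))$, symmetrise via $\mu_{xy}=\mu_{yx}$, and rearrange into $-\sum_x f(x)\,m(x)\,(\tfrac1{m(x)}\sum_y\mu_{xy}(g(y)-g(x)))$): for unbounded edge weights $\mu$ the intermediate double sums need not converge absolutely, so that route would require an exhaustion-plus-dominated-convergence argument together with the separate (and not-yet-established) fact that $L^{(N)}$ acts as the formal Laplacian on all of $D(L^{(N)})$. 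The form-theoretic argument bypasses both issues, and is the clean one.
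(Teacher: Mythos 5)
Your argument is correct, and it is essentially the same as the paper's: the paper offers no computation here but simply cites \cite[Corollary~1.3.1]{Fukushima}, i.e.\ the abstract Dirichlet-form identity $Q^{(N)}(f,g)=\langle f,-L^{(N)}g\rangle$ for $f\in D(Q^{(N)})$, $g\in D(L^{(N)})$, whose standard proof is exactly your square-root/self-adjointness computation. Your remark about why the naive term-by-term rearrangement is the wrong route for unbounded weights is also well taken.
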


For locally finite graphs, we define the \emph{formal Laplacian}, denoted by $\Delta,$ as
$$\Delta f(x)=\frac{1}{m(x)}\sum_{y\in X}\mu_{xy}(f(y)-f(x))\quad \forall\ f:V\to\R.$$ This formal Laplacian can be used to identify the generators defined before. A result of Keller and Lenz, \cite[Theorem~9]{KellerLenz12}, states that
\begin{equation}\label{e:generator action}L^{(D)}f=\Delta f,\quad \quad \forall\ f\in D(L^{(D)}),\end{equation} and a similar result holds for Neumann condition, see \cite{HaeselerKellerLenzWojciechowski12}. Note that$$\Delta f\in C_0(V), \ \ \forall\ f\in C_0(V).$$ Different choices for the measure $m$ induce different Laplacians. The typical choices are normalized Laplacians and combinatorial Laplacians, see Section~\ref{s:introduction}.



The measure $m$ on $V$ is called non-degenerate if \begin{equation}\label{d:nondegenerate}\delta:=\inf_{x\in V}m(x)>0.\end{equation} The nondegerancy of the measure $m$ yields a very useful fact for $\ell^p(V,m)$ spaces.
\begin{prop}\label{p:nondegenerate} Let $m$ be a non-degenerate measure on $V$ as in \eqref{d:nondegenerate}. Then for any $f\in \ell^p(V,m),$ $p\in [1,\infty),$
$$|f(x)|\leq \delta^{-\frac1p}\|f\|_{\ell^p_m}\ \ \ \forall x\in V.$$ Moreover, for any $1\leq p<q\leq \infty,$ $\ell^p(V,m)\hookrightarrow\ell^q(V,m).$
\end{prop}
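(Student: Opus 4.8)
The plan is to establish the pointwise bound first and then derive the embedding as a quick consequence. For the pointwise estimate, fix $x\in V$ and observe that the single term indexed by $x$ is dominated by the whole sum: $\|f\|_{\ell^p_m}^p=\sum_{y\in V}|f(y)|^pm(y)\geq |f(x)|^pm(x)$. Since $m(x)\geq\delta>0$ by non-degeneracy, this gives $|f(x)|^p\delta\leq\|f\|_{\ell^p_m}^p$, and taking $p$-th roots yields $|f(x)|\leq\delta^{-1/p}\|f\|_{\ell^p_m}$, as claimed. In particular, taking the supremum over $x$ shows $\ell^p(V,m)\hookrightarrow\ell^\infty(V,m)$ with $\|f\|_{\ell^\infty}\leq\delta^{-1/p}\|f\|_{\ell^p_m}$, which handles the case $q=\infty$ of the second assertion.

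For the remaining case $1\leq p<q<\infty$, I would interpolate between $\ell^p$ and $\ell^\infty$ at the level of the defining sums. Write
\[
\sum_{y\in V}|f(y)|^qm(y)=\sum_{y\in V}|f(y)|^{q-p}|f(y)|^pm(y)\leq\|f\|_{\ell^\infty}^{q-p}\sum_{y\in V}|f(y)|^pm(y)=\|f\|_{\ell^\infty}^{q-p}\|f\|_{\ell^p_m}^p,
\]
which is valid because $q-p>0$ and each $|f(y)|\leq\|f\|_{\ell^\infty}$. Now substitute the bound $\|f\|_{\ell^\infty}\leq\delta^{-1/p}\|f\|_{\ell^p_m}$ from the first part to get $\|f\|_{\ell^q_m}^q\leq\delta^{-(q-p)/p}\|f\|_{\ell^p_m}^{q}$, and hence $\|f\|_{\ell^q_m}\leq\delta^{1/q-1/p}\|f\|_{\ell^p_m}$. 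This shows $\ell^p(V,m)\hookrightarrow\ell^q(V,m)$ continuously and completes the proof.

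Honestly there is no real obstacle here: the argument is entirely elementary and uses only that the measure of each point is bounded below, together with the monotone comparison $|f(y)|^{q}=|f(y)|^{q-p}\cdot|f(y)|^{p}\leq\|f\|_{\ell^\infty}^{q-p}|f(y)|^{p}$. The only mild point of care is bookkeeping the exponents of $\delta$ so that the stated constant $\delta^{-1/p}$ (and the implicit constant $\delta^{1/q-1/p}$ for the embedding) comes out correctly; since $\delta\leq m(x)$ one should be careful that $\delta\leq 1$ is not assumed, so the constants are genuinely of the form $\delta^{-1/p}$ rather than an absolute constant. No properties of the graph structure $(V,E,\mu)$ are needed — only the measure $m$ and its non-degeneracy enter.
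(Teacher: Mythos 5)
Your proof is correct and follows the same route as the paper: the pointwise bound comes from comparing the single term at $x$ with the full sum and using $m(x)\geq\delta$, and the embedding is the standard interpolation between $\ell^p$ and $\ell^\infty$, which the paper simply cites as ``the interpolation theorem'' while you write it out explicitly (with the correct constant $\delta^{1/q-1/p}$).
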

\begin{proof}
  The first assertion follows from $|f(x)|^p\delta\leq |f(x)|^pm(x)\leq \|f\|_{\ell^p_m}^p.$ The second one is a consequence of the interpolation theorem.
\end{proof}

Under assumptions of non-degeneracy of the measure $m$ and local finiteness of the graph, the Dirichlet forms corresponding to Neumann and Dirichlet boundary conditions coincide, i.e.
$$Q^{(N)}=Q^{(D)},$$ see \cite[Theorem~6]{KellerLenz12} and \cite[Corollary~5.3]{HaeselerKellerLenzWojciechowski12}
and the domains of generators are characterized as
$$D(L^{(N)})=D(L^{(N)})=\{f\in\ell^2_m|\ \Delta f\in \ell^2_m\}.$$

\subsection{Gamma calculus}
We introduce the $\Gamma$-calculus and curvature dimension conditions on graphs following \cite{LinYau10,Bauer13}.

 First we define two natural bilinear forms associated to the Laplacian. Given $f: V\to\R$ and $x,y\in V,$ we denote by $\nabla_{xy}f:= f(y)-f(x)$ the difference of the function $f$ on the vertices $x$ and $y.$
\begin{defi}\label{d:carre du}
  The gradient form $\Gamma,$ called the ``carr\'e du champ" operator, is defined by
  \begin{eqnarray*}\Gamma(f,g)(x)&=&\frac12(\Delta(fg)-f\Delta g-g\Delta f)(x)\\&=&\frac{1}{2m(x)}
  \sum_{y}\mu_{xy}\nabla_{xy}f\nabla_{xy}g.\end{eqnarray*} For simplicity, we write $\Gamma(f):=\Gamma(f,f).$ Moreover, the iterated gradient form, denoted by $\Gamma_2$, is defined as $$\Gamma_2(f,g)=\frac12(\Delta\Gamma(f,g)-\Gamma(f,\Delta g)-\Gamma(g,\Delta f)).$$ We write $\Gamma_2(f):=\Gamma_2(f,f)=\frac{1}{2}\Delta \Gamma(f)-\Gamma(f,\Delta f).$
\end{defi}
The Cauchy-Schwarz inequality implies that \begin{equation}\label{eq:gamma}\Gamma(f,g)\leq \sqrt{\Gamma(f)\Gamma(g)}\leq \frac12 (\Gamma(f)+\Gamma(g)).\end{equation} In addition, one can easily see that $Q^{(N)}(f)=\|\Gamma(f)\|_{\ell^1_m}.$

Now we can introduce curvature dimension conditions on graphs.
\begin{defi}\label{d:curvature dimension}
  We say a graph $(V,E,m,\mu)$ satisfies the $\CCD(K,\infty)$ condition, $K\in \R$, if for any $x\in V,$
  $$\Gamma_2(f)(x)\geq K\Gamma (f)(x).$$
\end{defi}
One can also define a finite dimensional version, $\CCD(K,n)$ condition (see \cite{LinYau10}), which is stronger than $\CCD(K,\infty).$ An involved curvature dimension condition, called $\CDE(K,n),$ was introduced in \cite{Bauer13}. In this paper, we only consider $\CCD(K,\infty)$ conditions.

\subsection{Completeness of graphs}
Yau \cite{Yau78} first proved that complete Riemannian manifolds with Ricci curvature uniformly bounded from below are stochastically complete.
Bakry \cite{Bakry86} proved the stochastic completeness for weighted Riemannian manifolds satisfying $\CCD(K,\infty)$ condition for weighted Laplacians, see also Li \cite{Lixiangdong05}. The completeness of Riemannian manifolds plays an important role in these problems.

For a graph $(V,E,m,\mu),$ we define the completeness of a graph as in \eqref{d:complete}, see Section~\ref{s:introduction}. The following lemma shows the importance of the completeness of a graph. Note that we don't need the non-degenerancy of the measure $m$ here.
\begin{lemma}\label{l:Qdensity} Let $(V,E,m,\mu)$ be a complete graph.
  For any $f\in \ell^2_m$ such that $Q^{(N)}(f)<\infty$ we have $$\|f\eta_k-f\|_Q\to 0,\ \ \ k\to\infty.$$ That is, $C_0(V)$ is a dense subset of the Hilbert space $(D(Q^{(N)}),\|\cdot\|_Q)$ and $Q^{(N)}=Q^{(D)}.$
\end{lemma}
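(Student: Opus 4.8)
The plan is to show directly that the finitely supported functions $f\eta_k$ converge to $f$ in the $\|\cdot\|_Q$-norm, which immediately gives that $C_0(V)$ is dense in $D(Q^{(N)})$ and hence that $D(Q^{(D)})=D(Q^{(N)})$ and the two forms agree. Since $\|g\|_Q^2=\|g\|_{\ell^2_m}^2+Q^{(N)}(g)$, it suffices to prove separately that $\|f\eta_k-f\|_{\ell^2_m}\to 0$ and $Q^{(N)}(f\eta_k-f)\to 0$. The first is routine: each $\eta_k$ is bounded (being finitely supported) and $\eta_k\to\mathds{1}$ pointwise with $0\le\eta_k\le 1$ (this last fact following from the nondecreasing property together with $\lim_k\eta_k=\mathds{1}$; if the $\eta_k$ are not \emph{a priori} bounded by $1$ one argues a bit more carefully, but the monotone-convergence setup makes $|f\eta_k-f|\le C|f|\in\ell^2_m$ dominating, so dominated convergence applies).

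The substantive part is the estimate on $Q^{(N)}(f\eta_k-f)$. First I would record the discrete product (Leibniz) rule for differences: for $x\sim y$,
\[
\nabla_{xy}(f\eta_k)-\nabla_{xy}f=(\eta_k(y)-1)\nabla_{xy}f+f(x)\nabla_{xy}\eta_k ,
\]
so that, using $(a+b)^2\le 2a^2+2b^2$,
\[
\mu_{xy}\big(\nabla_{xy}(f\eta_k-f)\big)^2\le 2\mu_{xy}(\eta_k(y)-1)^2(\nabla_{xy}f)^2+2\mu_{xy}f(x)^2(\nabla_{xy}\eta_k)^2 .
\]
Summing over all $x,y$ and dividing by $2$ gives
\[
Q^{(N)}(f\eta_k-f)\le 2\sum_{x,y}\mu_{xy}(\eta_k(y)-1)^2(\nabla_{xy}f)^2+2\sum_{x}f(x)^2\Big(\sum_y\mu_{xy}(\nabla_{xy}\eta_k)^2\Big).
\]
In the second term the inner sum is $2m(x)\Gamma(\eta_k)(x)\le \frac{2m(x)}{k}$ by the completeness hypothesis \eqref{d:complete}, so that term is bounded by $\frac{4}{k}\|f\|_{\ell^2_m}^2\to 0$. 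For the first term, note each summand is dominated by $4\mu_{xy}(\nabla_{xy}f)^2$, and $\sum_{x,y}\mu_{xy}(\nabla_{xy}f)^2=2Q^{(N)}(f)<\infty$ by assumption, while $(\eta_k(y)-1)^2\to 0$ pointwise; dominated convergence (applied to the summable family indexed by ordered pairs $(x,y)$) then forces the first term to $0$ as well.

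I expect the main obstacle to be purely bookkeeping rather than conceptual: one must make sure the two terms in the symmetrized splitting are handled with the correct roles of $x$ and $y$ (the Leibniz expansion is not symmetric in $x,y$, so one should either symmetrize or simply note that relabeling contributes only a harmless factor of $2$), and one must be slightly careful that all the interchanges of summation and limit are justified — which they are, since everything in sight is dominated by the fixed summable quantities $\|f\|_{\ell^2_m}^2$ and $Q^{(N)}(f)$. Once both pieces are shown to vanish, the equality $Q^{(N)}=Q^{(D)}$ follows because $D(Q^{(D)})$ is by definition the $\|\cdot\|_Q$-closure of $C_0(V)$, which we have now shown contains every $f\in\ell^2_m$ with $Q^{(N)}(f)<\infty$, i.e. all of $D(Q^{(N)})$.
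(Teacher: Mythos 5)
Your proposal is correct and follows essentially the same route as the paper's proof: the same Leibniz-type expansion $\nabla_{xy}(f(\eta_k-1))=(\eta_k(y)-1)\nabla_{xy}f+f(x)\nabla_{xy}\eta_k$, the same splitting into two terms, dominated convergence for the first, and the completeness bound $\Gamma(\eta_k)\leq 1/k$ for the second. The only differences are cosmetic (explicit constants and the extra care about $|\eta_k-1|$ being uniformly bounded, which the paper leaves implicit).
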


\begin{proof}
  It is easy to see that $f_k:=f\eta_k\to f$ in $\ell^2_m.$ So it suffices to show that $Q^{(N)}(f_k-f)\to 0, $ $k\to \infty.$
  \begin{eqnarray*}
    Q^{(N)}(f_k-f)&=&\frac12\sum_{x,y}\mu_{xy}|\nabla_{xy}f(\eta_k-1)|^2\\
    &=&\frac{1}{2}\sum_{x,y}\mu_{xy}|\nabla_{xy}f \cdot(\eta_k(y)-1)+f(x)\nabla_{xy}\eta_k|^2\\
    &\leq&\sum_{x,y}\mu_{xy}(|\nabla_{xy}f|^2 |\eta_k(y)-1|^2+f^2(x)|\nabla_{xy}\eta_k|^2)\\
    &=&I_k+II_k.
  \end{eqnarray*} By the dominated convergence theorem, $I_k\to 0$ as $k\to\infty.$ For the second term,
  $$II_k\leq\frac{2}{k^2}\sum_{x}f^2(x)m(x)\to 0,\ \ \ k\to\infty.$$ This proves the lemma.
\end{proof}

Hence for a complete graph, $Q^{(N)}=Q^{(D)}.$ In the rest of the paper, given a complete graph we simply write $Q=Q^{(N)}=Q^{(D)},$ and by \eqref{e:generator action}
$$L=\Delta,\quad \quad\quad  \mathrm{on}\quad \ D(Q).$$

\subsection{Intrinsic metrics}\label{s:intrinsic}
The Laplacian associated with the graph $(V,E,m,\mu)$ is a bounded operator from $\ell^2(V,m)$ to $\ell^2(V,m)$ if and only if $$\sup_{x\in V} \frac{1}{m(x)}\sum_{y\sim x}\mu_{xy}<\infty.$$
In order to deal with unbounded Laplacians, we need the following intrinsic metrics on graphs introduced in \cite{FrankLenzWingert12}.

A pseudo metric $\rho$ is a symmetric function, $\rho:V\times V\to[0,\infty),$ with zero diagonal which satisfies the triangle inequality.

\begin{defi}[Intrinsic metric]\label{d:intrinsic} A pseudo metric $\rho$ on $V$ is called \emph{intrinsic} if
\begin{align*}
\sum_{y\in V}\mu_{xy}\rho^{2}(x,y)\leq m(x),\qquad x\in V.
\end{align*}
\end{defi}

In various situations the natural graph distance, called the combinatorial distance, proves to be insufficient for the investigations of unbounded Laplacians, see \cite{Wojciechowski1,Wojciechowski2,KellerLenzWojciechowski}. For this reason the concept of intrinsic metrics received quite some attention as a candidate to overcome these problems. Indeed, intrinsic metrics already have been applied successfully to various problems on graphs \cite{BauerHuaKeller,BauerKellerWojciechowski,FOLZ,Folz14,GrigoryanHuangMasamune12,HuangKellerMasamuneWojciechowski,HuaKeller14}.

Fix a base point  $o\in V$ and
denote the distance balls by
\begin{align*}
    B_{r}(o)=\{x\in V\mid\rho(x,o)\leq r\},\qquad r\ge0.
\end{align*}
The choice of the base point $o$ will be irrelevant to our results later. We say $B_{r}(o)$ is finite, if it is of finite cardinality, i.e. $\sharp B_{r}(o)<\infty.$

\begin{thm}\label{thm:complete intrinsic}
  Let $G=(V,E,m,\mu)$ be a graph and $\rho$ be an intrinsic metric on $G.$ Suppose that each ball $B_r(o),$ $r>0$, is finite, then
  $G$ is a complete graph.
\end{thm}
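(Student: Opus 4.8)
The plan is to produce, for each $k$, an explicit finitely supported function $\eta_k$ built from the distance function $\rho(\cdot,o)$, and to verify the two requirements in the definition of completeness \eqref{d:complete}: monotone convergence to $\mathds{1}$, and the gradient bound $\Gamma(\eta_k)\le 1/k$. The natural candidate is a "tent" or Lipschitz cutoff: fix a $1$-Lipschitz, nondecreasing function $\phi_R:[0,\infty)\to[0,1]$ that equals $1$ on $[0,R]$, decreases linearly with slope $-1/R$ on $[R,2R]$, and equals $0$ on $[2R,\infty)$, and set $\eta_k(x)=\phi_{R_k}(\rho(x,o))$ for a suitable radius $R_k\to\infty$. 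Since every ball $B_r(o)$ is finite, $\eta_k$ is supported in $B_{2R_k}(o)$ and hence finitely supported, and $\eta_k\nearrow\mathds{1}$ pointwise as $R_k\to\infty$ (after arranging $R_k$ to be nondecreasing and passing to the increasing rearrangement of the sequence if needed).

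The core computation is the bound on $\Gamma(\eta_k)$. By definition,
\[
\Gamma(\eta_k)(x)=\frac{1}{2m(x)}\sum_{y}\mu_{xy}\,\bigl(\phi_{R_k}(\rho(y,o))-\phi_{R_k}(\rho(x,o))\bigr)^2.
\]
Using that $\phi_{R_k}$ is Lipschitz with constant $1/R_k$ together with the triangle inequality $|\rho(y,o)-\rho(x,o)|\le\rho(x,y)$, we get
\[
\bigl(\phi_{R_k}(\rho(y,o))-\phi_{R_k}(\rho(x,o))\bigr)^2\le \frac{1}{R_k^2}\,\rho^2(x,y).
\]
Substituting and invoking the intrinsic metric inequality $\sum_{y}\mu_{xy}\rho^2(x,y)\le m(x)$ from Definition~\ref{d:intrinsic} yields
\[
\Gamma(\eta_k)(x)\le \frac{1}{2m(x)}\cdot\frac{1}{R_k^2}\sum_{y}\mu_{xy}\rho^2(x,y)\le \frac{1}{2R_k^2}.
\]
Hence choosing $R_k$ so that $\tfrac{1}{2R_k^2}\le \tfrac1k$, e.g. $R_k=\lceil\sqrt{k}\,\rceil$, gives $\Gamma(\eta_k)\le 1/k$ uniformly on $V$, as required.

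I expect the only genuine subtlety to be bookkeeping rather than mathematics: one must make sure the sequence $\{\eta_k\}$ is actually \emph{nondecreasing} (since $\phi_{R_k}$ with $R_k$ nondecreasing gives a nondecreasing family of cutoffs, this is automatic once $R_k$ is chosen nondecreasing in $k$), that $\eta_k\in C_0(V)$ (which is exactly where the hypothesis that every ball $B_r(o)$ is finite is used), and that the pointwise limit is $\mathds{1}$ (which holds because $\rho(\cdot,o)$ is finite at every vertex, the graph being connected and $\rho$ a pseudmetric — note that even if $\rho$ were degenerate on some pairs, $\phi_{R_k}\circ\rho(\cdot,o)$ still tends to $1$ pointwise). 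No completeness in any metric sense and no curvature assumption enters; the intrinsic-metric inequality is the whole engine. Thus the theorem follows by exhibiting $\eta_k(x)=\phi_{R_k}(\rho(x,o))$ with $R_k=\lceil\sqrt{k}\,\rceil$ and checking these three points.
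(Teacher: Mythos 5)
Your proof is correct and is essentially the paper's own argument: both construct a $1/R$-Lipschitz cutoff of the distance function $\rho(\cdot,o)$ supported in a finite ball and use the intrinsic metric inequality $\sum_y\mu_{xy}\rho^2(x,y)\le m(x)$ to get $\Gamma(\eta_k)\le \frac{1}{2R^2}$. The paper simply takes the annulus $B_{2k}\setminus B_k$ (slope $1/k$, giving $\Gamma(\eta_k)\le\frac{1}{2k^2}<\frac1k$) where you take $R_k=\lceil\sqrt k\,\rceil$; this is only a difference in parameter choice.
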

\begin{proof}
  For any $0<r<R,$ we denote by $\eta_{r,R}$ the cut-off function on $B_R(o)\setminus B_r(o)$ defined as
\begin{align*}
    \eta_{r,R}(\cdot)=\min\left\{\max\left\{\frac{R-\rho(\cdot,o)}{R-r},0\right\},1\right\}.
\end{align*} Set $\eta_k:=\eta_{k,2k}.$ Then $\{\eta_k\}$ is a nondecreasing sequence of finitely supported functions which converges to the constant function $\mathds{1}$ pointwise. Moreover,
\begin{eqnarray*}
  \Gamma(\eta_k)(x)&=&\frac{1}{2m(x)}\sum_{y\in V}\mu_{xy}|\nabla_{xy}\eta_k|^2\\
  &\leq& \frac{1}{2m(x)k^2}\sum_{y\in V}\mu_{xy}\rho^2(x,y)\\
  &\leq&\frac{1}{2k^2}<\frac{1}{k},
\end{eqnarray*} where we used the definition of the intrinsic metric $\rho.$ This proves the theorem.
\end{proof}

For any weighted graph $(V,E,m,\mu),$ intrinsic metrics always exist. There is a natural intrinsic metric introduced by Huang \cite[Lemma~1.6.4]{Huang11}. Define the \emph{weighted vertex degree} $\mathrm{Deg}:V\to[0,\infty)$ by
\begin{align*}
    \mathrm{Deg}(x)=\frac{1}{m(x)}\sum_{y\in V}\mu_{xy},\qquad x\in V.
\end{align*}

\begin{example}\label{ex:intrinsic}
For any  given weighted graph there is an intrinsic path metric defined by
\begin{align*}
    \de(x,y)=\inf_{x=x_{0}\sim\ldots\sim x_{n}=y}\sum_{i=0}^{n-1} (\mathrm{Deg}(x_{i})\vee\mathrm{Deg}(x_{i+1}))^{-\frac{1}{2}},\quad\quad x,y\in V,
\end{align*} where the infimum is taken over all finite paths connecting $x$ and $y.$

\end{example}
For the completeness of the graph, it suffices to find an intrinsic metric satisfying the conditions in Theorem~\ref{thm:complete intrinsic}. For instance, one can check whether each ball of finite radius under the metric $\delta$ is finite.

\section{Semigroups and Caccioppoli inequality}\label{s:Caccioppoli}
\subsection{Semigroups on graphs}
In this section, we study the properties of heat semigroups on graphs, which will be used later.

We denote by $P_t^{(D)}=e^{tL^{(D)}}$ the $C_0$-semigroup associated to the Dirichlet form $Q^{(D)}$ on $\ell^2_m.$ It extrapolates to $C_0$-semigroups on $\ell^p_m$ for all $p\in [1,\infty],$ for simplicity still denoted by $P_t^{(D)},$ see \cite{KellerLenz12}.
\begin{defi}\label{d:stochastic complete graph}
  A weighted graph $(V,E,m,\mu)$ is called \emph{stochastically complete} if
  $$P_t^{(D)}\mathds{1}=\mathds{1},\quad \forall\ t>0,$$ where $\mathds{1}$ is the constant function $1$ on $V.$
\end{defi}
The next proposition is a consequence of standard Dirichlet form theory, see \cite{Fukushima} and \cite{KellerLenz12}.
\begin{prop}\label{p:basic pt}
  For any $f\in \ell^p_m,$ $p\in[1,\infty],$ we have $P_t^{(D)} f\in \ell^p_m$ and
  $$\|P_t^{(D)} f\|_{\ell^p_m}\leq \|f\|_{\ell^p_m},\ \ \ \forall t\geq 0.$$ Moreover, $P_t^{(D)} f\in D(L^{(D)})$ for any $f\in \ell^2_m.$
\end{prop}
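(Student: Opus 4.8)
The plan is to obtain the statement by feeding the abstract properties of the Dirichlet form $Q^{(D)}$ into standard Dirichlet form theory, exactly as indicated by the cited references. First recall that, being the semigroup of a Dirichlet form, $(P_t^{(D)})_{t\ge 0}$ is a symmetric sub-Markovian semigroup on $\ell^2_m$: it is positivity preserving, and $0\le f\le \mathds{1}$ with $f\in\ell^2_m$ implies $0\le P_t^{(D)}f\le\mathds{1}$. From $-L^{(D)}\ge 0$ and the spectral theorem one gets $\|P_t^{(D)}\|_{\ell^2_m\to\ell^2_m}\le 1$ for all $t\ge 0$. On $\ell^2_m\cap\ell^\infty_m$, positivity together with the sub-Markovian bound applied to $f^{\pm}/\|f\|_{\ell^\infty_m}$ gives $\|P_t^{(D)}f\|_{\ell^\infty_m}\le\|f\|_{\ell^\infty_m}$; dually, using self-adjointness of $P_t^{(D)}$ on $\ell^2_m$ and testing against functions in $C_0(V)$, one gets $\|P_t^{(D)}f\|_{\ell^1_m}\le\|f\|_{\ell^1_m}$ on $\ell^2_m\cap\ell^1_m$. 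The Riesz--Thorin interpolation theorem then yields $\|P_t^{(D)}f\|_{\ell^p_m}\le\|f\|_{\ell^p_m}$ for every $p\in[1,\infty]$ and every $f$ in the dense subspace $\ell^2_m\cap\ell^p_m$.

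Next I would pass to all of $\ell^p_m$. For $p\in[1,\infty)$ the subspace $C_0(V)\subseteq\ell^2_m\cap\ell^p_m$ is dense in $\ell^p_m$, so $P_t^{(D)}$ extends uniquely by continuity to a contraction on $\ell^p_m$; these extensions agree with one another and with the $\ell^2_m$-semigroup on the intersections of the corresponding spaces, and for $p<\infty$ they again form $C_0$-semigroups. For $p=\infty$ one instead defines $P_t^{(D)}$ as the adjoint of the $\ell^1_m$-semigroup; this is the extrapolation referred to in \cite{KellerLenz12}, and in particular it is how $P_t^{(D)}\mathds{1}$ in Definition~\ref{d:stochastic complete graph} is to be understood. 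In each case the contraction estimate survives the limit, for instance by Fatou's lemma applied to the pointwise-convergent approximants (recall $m$ has full support), which gives the first two assertions.

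For the last assertion, fix $f\in\ell^2_m$ and $t>0$. Since $L^{(D)}$ is self-adjoint with spectrum in $(-\infty,0]$, the spectral theorem writes $P_t^{(D)}=e^{tL^{(D)}}$ as a bounded function of $L^{(D)}$, and since $\lambda\mapsto\lambda e^{t\lambda}$ is bounded on $(-\infty,0]$, the vector $P_t^{(D)}f$ lies in $D(L^{(D)})$ --- indeed in $D((L^{(D)})^n)$ for every $n$, i.e. the semigroup is analytic and instantaneously smoothing. (The statement is of course understood for $t>0$; at $t=0$ it would read $f\in D(L^{(D)})$, which fails for general $f\in\ell^2_m$.) I do not expect a genuine obstacle here: every step is either an invocation of the general theory of symmetric Dirichlet forms \cite{Fukushima} and its graph incarnation \cite{KellerLenz12}, or a direct application of the spectral theorem; the only mildly delicate bookkeeping is checking the consistency and strong continuity of the $\ell^p_m$-extrapolations in the second paragraph, and this is precisely what the cited literature supplies.
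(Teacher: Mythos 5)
Your proposal is correct and follows exactly the route the paper intends: the paper offers no proof of its own, merely citing standard Dirichlet form theory (\cite{Fukushima}, \cite{KellerLenz12}), and your argument is precisely the content of those citations --- sub-Markovianity, the spectral theorem, duality and interpolation for the $\ell^p_m$ contractions, and analyticity of the semigroup for $P_t^{(D)}f\in D(L^{(D)})$. Your remark that the last assertion should be read for $t>0$ is a fair observation about the statement as written.
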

The next property follows from the spectral theorem.
\begin{prop}\label{p:exchange}
  For any $f\in D(L^{(D)}),$ $$L^{(D)} P_t^{(D)} f=P_t^{(D)}L^{(D)} f.$$
\end{prop}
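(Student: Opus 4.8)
The plan is to invoke the spectral calculus for the self-adjoint operator $L^{(D)}$ on $\ell^2_m$. Since $-L^{(D)}$ is a nonnegative self-adjoint operator, the spectral theorem provides a projection-valued measure $E_\lambda$ supported on $[0,\infty)$ such that $-L^{(D)}=\int_{[0,\infty)}\lambda\, dE_\lambda$ and, correspondingly, $P_t^{(D)}=e^{tL^{(D)}}=\int_{[0,\infty)}e^{-t\lambda}\,dE_\lambda$ for every $t\geq 0$. A function $f\in\ell^2_m$ lies in $D(L^{(D)})$ precisely when $\int_{[0,\infty)}\lambda^2\,d\langle E_\lambda f,f\rangle<\infty$.

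First I would fix $f\in D(L^{(D)})$ and observe that the two bounded Borel functions $\varphi(\lambda)=\lambda\,e^{-t\lambda}$ and the product of $\lambda\mapsto\lambda$ with $\lambda\mapsto e^{-t\lambda}$ coincide on $[0,\infty)$; hence by the multiplicativity of the functional calculus, $L^{(D)}P_t^{(D)}$ and $P_t^{(D)}L^{(D)}$ are both realized as the (possibly unbounded) operator $-\int_{[0,\infty)}\lambda e^{-t\lambda}\,dE_\lambda$ on the common natural domain. Concretely, for $f\in D(L^{(D)})$ one has $P_t^{(D)}f\in D(L^{(D)})$ because $\int_{[0,\infty)}\lambda^2 e^{-2t\lambda}\,d\langle E_\lambda f,f\rangle\leq\int_{[0,\infty)}\lambda^2\,d\langle E_\lambda f,f\rangle<\infty$ (using $e^{-2t\lambda}\leq 1$ for $\lambda,t\geq 0$), so $L^{(D)}P_t^{(D)}f$ is well-defined; similarly $L^{(D)}f\in\ell^2_m$ and hence $P_t^{(D)}L^{(D)}f$ is well-defined since $P_t^{(D)}$ is bounded on $\ell^2_m$ by Proposition~\ref{p:basic pt}. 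Then I would compute, for arbitrary $g\in\ell^2_m$,
\[
\langle L^{(D)}P_t^{(D)}f,g\rangle=-\int_{[0,\infty)}\lambda e^{-t\lambda}\,d\langle E_\lambda f,g\rangle=\langle P_t^{(D)}L^{(D)}f,g\rangle,
\]
the middle expression being a legitimate (absolutely convergent) integral against the complex measure $d\langle E_\lambda f,g\rangle$ whose total variation is controlled by $\|f\|\,\|g\|$ on bounded sets and by the $D(L^{(D)})$-norm globally. Since $g$ is arbitrary, the two elements of $\ell^2_m$ agree.

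The only subtle point—and the one I would be careful to state cleanly rather than skip—is the bookkeeping of domains for unbounded operators: one must check that both sides are defined on the same vectors before asserting equality, which is exactly the content of the estimate $\int\lambda^2 e^{-2t\lambda}\,d\langle E_\lambda f,f\rangle<\infty$ above. There is no real analytic obstacle here; this is a textbook fact about strongly continuous semigroups generated by self-adjoint operators, and one could alternatively quote it directly (e.g. from the general theory of $C_0$-semigroups: the generator commutes with the semigroup on the generator's domain). Given the level of the paper, a one-line appeal to the spectral theorem in the style above should suffice.
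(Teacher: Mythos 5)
Your proof is correct and follows exactly the route the paper takes: the paper disposes of this proposition with the single remark that it ``follows from the spectral theorem,'' and your argument is simply that remark carried out in full, including the domain check $\int\lambda^2 e^{-2t\lambda}\,d\langle E_\lambda f,f\rangle\leq\int\lambda^2\,d\langle E_\lambda f,f\rangle<\infty$ showing $P_t^{(D)}f\in D(L^{(D)})$. Nothing further is needed.
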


\subsection{Caccioppoli inequality}
For elliptic partial differential equations on Riemannian manifolds, the Caccioppoli inequality is well-known and yields the $L^p$ Liouville theorem for harmonic functions for $p\in (1,\infty),$ see Yau \cite{Yau76}.

By adapting PDE techniques on manifolds to graphs, we obtain the Caccioppoli inequality for subsolutions to Poisson's equations.
\begin{lemma}\label{l:Caccippoli}
  Let $(V,E,m,\mu)$ be a weighted graph and $g,h:V\to\R$ satisfy the following $$\Delta g\geq h.$$ Then for any $\eta\in C_0(V),$
  \begin{equation}\label{e:caccioppoli}
    \|\Gamma(g) \eta^2\|_{\ell^1_m}\leq C(\|\Gamma(\eta) g^2\|_{\ell^1_m}+\|gh \eta^2\|_{\ell^1_m}).
  \end{equation}
\end{lemma}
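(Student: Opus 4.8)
The plan is to mimic the classical Caccioppoli argument: test the differential inequality $\Delta g \ge h$ against the function $g\eta^2$ and use Green's formula to move one difference onto the test function, after which the ``carr\'e du champ'' $\Gamma(g)\eta^2$ appears with a favorable sign, while the error terms are controlled by Young's inequality. Since $\eta \in C_0(V)$, everything below is a finite sum and no convergence issue arises; in particular $g\eta^2 \in C_0(V)$, and the formal Laplacian $\Delta$ may be used freely.

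First I would write, using the pointwise identity $\sum_x m(x)\, \varphi(x)\, \Delta\psi(x) = -\frac12\sum_{x,y}\mu_{xy}\nabla_{xy}\varphi\,\nabla_{xy}\psi$ (the combinatorial Green's formula, valid for $\varphi$ finitely supported),
\begin{equation*}
\sum_{x} m(x)\, g(x)\eta^2(x)\, \Delta g(x) \;=\; -\frac12 \sum_{x,y}\mu_{xy}\,\nabla_{xy}(g\eta^2)\,\nabla_{xy}g.
\end{equation*}
Next I would expand the discrete product rule $\nabla_{xy}(g\eta^2) = \eta^2(y)\nabla_{xy}g + g(x)\nabla_{xy}(\eta^2)$ and further $\nabla_{xy}(\eta^2) = (\eta(x)+\eta(y))\nabla_{xy}\eta$. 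Substituting, the right-hand side becomes
\begin{equation*}
-\frac12\sum_{x,y}\mu_{xy}\eta^2(y)(\nabla_{xy}g)^2 \;-\; \frac12\sum_{x,y}\mu_{xy}\, g(x)(\eta(x)+\eta(y))\,\nabla_{xy}\eta\,\nabla_{xy}g.
\end{equation*}
The first sum, after symmetrizing in $x,y$ (replacing $\eta^2(y)$ by $\tfrac12(\eta^2(x)+\eta^2(y))$ and discarding the manifestly nonnegative cross contribution, or more simply bounding below), is comparable to $\|\Gamma(g)\eta^2\|_{\ell^1_m}$ up to harmless terms of the form handled below; I would keep $-\sum_{x,y}\mu_{xy}\eta^2(x)(\nabla_{xy}g)^2 = -2\|\Gamma(g)\eta^2\|_{\ell^1_m}$ as the main negative term and absorb the discrepancy $\sum_{x,y}\mu_{xy}(\eta^2(y)-\eta^2(x))(\nabla_{xy}g)^2$ into the error estimate via $|\eta^2(y)-\eta^2(x)| = |\eta(x)+\eta(y)|\,|\nabla_{xy}\eta|$ and Young's inequality.

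The decisive step is the estimate of the cross term $\sum_{x,y}\mu_{xy}\,|g(x)|\,(\eta(x)+\eta(y))\,|\nabla_{xy}\eta|\,|\nabla_{xy}g|$: I would apply the elementary inequality $ab \le \varepsilon a^2 + \tfrac{1}{4\varepsilon}b^2$ with $a$ carrying the factor $(\eta(x)+\eta(y))^{1/2}\cdot(\text{something}\le\eta(y)\text{-weighted})^{1/2}|\nabla_{xy}g|$ and $b$ carrying $|g(x)|(\eta(x)+\eta(y))^{1/2}|\nabla_{xy}\eta|$, together with the pointwise bound $(\eta(x)+\eta(y))^2 \le 2(\eta^2(x)+\eta^2(y))$. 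Choosing $\varepsilon$ small enough that the resulting $\varepsilon\sum_{x,y}\mu_{xy}(\eta^2(x)+\eta^2(y))(\nabla_{xy}g)^2$ term is dominated by the main negative term $-2\|\Gamma(g)\eta^2\|_{\ell^1_m}$ (again symmetrizing the $\eta^2(y)$ piece), this leaves on the right only a multiple of $\sum_{x,y}\mu_{xy}g^2(x)(\eta^2(x)+\eta^2(y))(\nabla_{xy}\eta)^2 \lesssim \|\Gamma(\eta)g^2\|_{\ell^1_m}$ plus an extra piece controlled similarly. Finally, the left-hand side of Green's formula is bounded below using $\Delta g \ge h$: since $g\eta^2$ need not have a sign I would instead note that the quantity $\sum_x m(x) g\eta^2 \Delta g$ equals $\sum_x m(x) g\eta^2 h + \sum_x m(x) g\eta^2(\Delta g - h)$, and handle the sign of $\Delta g - h \ge 0$ by splitting according to the sign of $g\eta^2$; the portion where $g(x) \ge 0$ only helps, and the portion where $g(x) < 0$ contributes a term of the form $\sum_x m(x)|g|\eta^2(\Delta g - h)$ which one bounds crudely by re-running Green's formula or by absorbing into the already-obtained gradient terms. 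Rearranging yields \eqref{e:caccioppoli} with an absolute constant $C$.

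The main obstacle, and the point requiring care, is precisely the handling of the sign in the last paragraph: unlike the Riemannian case where one multiplies by $g_+\eta^2$ or $g^2\eta^2$ with a definite sign, here $g$ is arbitrary and the discrete test function $g\eta^2$ is not sign-definite, so the inequality $\Delta g \ge h$ alone does not immediately give a one-sided bound on $\sum_x m(x) g\eta^2 \Delta g$. The cleanest fix is likely to observe that we only ever need the inequality in a form where $g$ is later taken to be $P_t^{(D)}f$ or a related nonnegative quantity, or alternatively to absorb the ambiguous term $\sum_x m(x)\max(-g,0)\eta^2(\Delta g - h)$ — which is itself of ``Caccioppoli type'' — back into the left side after a second application of the product-rule expansion; I would verify that the constants still close. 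The remaining steps (the product rule for $\nabla_{xy}$, symmetrization, Young's inequality) are entirely routine discrete computations.
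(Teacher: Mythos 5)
Your computation is the same as the paper's: test $\Delta g\ge h$ against the finitely supported function $g\eta^2$, integrate by parts via Green's formula, expand the discrete product rule, and absorb the cross term with Young's inequality. One technical worry you raise is a non-issue: the ``discrepancy'' $\sum_{x,y}\mu_{xy}\bigl(\eta^{2}(y)-\eta^{2}(x)\bigr)(\nabla_{xy}g)^{2}$ vanishes identically, being an antisymmetric factor summed against a symmetric one, so after relabelling $x\leftrightarrow y$ the main negative term is exactly $-\|\Gamma(g)\eta^{2}\|_{\ell^1_m}$ with no extra piece to absorb; likewise the term containing $\nabla_{xy}g\,g(\cdot)\,|\nabla_{xy}\eta|^{2}$ symmetrizes to $-\tfrac14\sum_{x,y}\mu_{xy}(\nabla_{xy}g)^{2}|\nabla_{xy}\eta|^{2}\le 0$ and can simply be discarded, which is what the paper does.

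The point you flag as ``requiring care''---the sign of $g$---is indeed the only real issue, and it is also present, silently, in the paper's own proof, whose first line multiplies $\Delta g\ge h$ by $\eta^{2}g$ to get $\sum_{x}\eta^{2}gh\,m\le\sum_{x}\eta^{2}g\,\Delta g\,m$; this needs $g\ge0$ wherever $\Delta g>h$. Of your two proposed repairs, the second (absorbing the ambiguous term $\sum_{x}m(x)\,g\eta^{2}(\Delta g-h)$ back into the gradient terms) cannot work, because the lemma is genuinely false without a sign hypothesis: on $\Z$ with $\mu\equiv1$, $m\equiv1$, take $g=-\delta_{0}$ and $h=\min(\Delta g,0)=-\delta_{1}-\delta_{-1}$. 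Then $\Delta g\ge h$, $gh\equiv0$ (the supports are disjoint), and for $\eta\equiv1$ on $\{-2,\dots,2\}$ one has $\Gamma(\eta)g^{2}\equiv0$, so the right-hand side of \eqref{e:caccioppoli} vanishes while $\|\Gamma(g)\eta^{2}\|_{\ell^1_m}\ge\Gamma(g)(0)=1$. Your first suggestion is the correct fix: the lemma should carry the extra hypothesis that $g\ge0$ or that $\Delta g=h$ holds with equality, and this covers every use made of it---Lemma~\ref{lem:energy estimate} applies it with $g=P_{t}f$, $h=\Delta P_{t}f$ (equality), and Theorem~\ref{thm:w12} with $g=\Gamma(P_{t}f)\ge0$. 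With that hypothesis added, your argument (and the paper's) closes with an absolute constant.
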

\begin{proof}
Multiplying $\eta^2 g$ to both sides of the inequality, $\Delta g\geq h,$ and summing over $x\in V$ w.r.t. the measure $m,$ we get
\begin{eqnarray*}
  &&\sum_x \eta^2gh (x) m(x)\leq \sum_x \eta^2 g \Delta g(x) m(x)\\
  &=&-\frac12\sum_{x,y}\nabla_{xy}g\nabla_{xy}(\eta^2 g)\mu_{xy}\\
  &=&-\frac12\sum_{x,y}\nabla_{xy}g(\nabla_{xy}g \eta^2(x)+g(y)\nabla_{xy}(\eta^2))\mu_{xy}\\
  &=&-\frac12\sum_{x,y}|\nabla_{xy}g|^2 \eta^2(x)\mu_{xy}-\frac12\sum_{x,y}\nabla_{xy}gg(y) (|\nabla_{xy}\eta|^2+2\eta(x)\nabla_{xy}\eta)\mu_{xy},
\end{eqnarray*} where we used Green's formula, see e.g. Lemma~\ref{l:green formula}, in the second line since $\eta\in C_0(V).$ For the second term in the last line, by symmetry one has
$$-\frac12\sum_{x,y}\nabla_{xy}gg(y) |\nabla_{xy}\eta|^2\mu_{xy}=-\frac{1}{4}\sum_{x,y}|\nabla_{xy}g|^2 |\nabla_{xy}\eta|^2\mu_{xy}\leq 0.$$ Hence, by this observation, the previous estimate leads to
\begin{eqnarray*}
  &&\frac12\sum_{x,y}|\nabla_{xy}g|^2 \eta^2(x)\mu_{xy}\\
  &\leq&-\sum_{x,y}\nabla_{xy}gg(y) \eta(x)\nabla_{xy}\eta\mu_{xy}-\sum_x \eta^2gh (x) m(x)\\
  &\leq& \frac14\sum_{x,y}|\nabla_{xy}g|^2\eta^2(x)\mu_{xy}+\sum_{x,y}|\nabla_{xy} \eta|^2g^2(y)\mu_{xy}-\sum_x \eta^2gh (x) m(x),
\end{eqnarray*} where we used basic inequality $ab\leq \frac14a^2+b^2$ for $a,b\in\R.$ The lemma follows from cancelling the first term in the last line with the left hand side of the system of inequalities.

\end{proof}

Using this Caccippoli inequality, we get a uniform upper bound of the Dirichlet energy of $P_tf$ for $t>0$ and $f\in C_0(V).$
\begin{lemma}\label{lem:energy estimate}
  Let $(V,E,m,\mu)$ be a complete graph. Then for any $f\in C_0(V)$ and $t\in [0,\infty),$
  $$Q(P_tf)=\|\Gamma(P_t f)\|_{\ell^1_m}\leq C \|f\|_{\ell^2_m}\|\Delta f\|_{\ell^2_m},$$ where $C$ is a uniform constant.
\end{lemma}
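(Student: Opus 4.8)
The plan is to apply the Caccioppoli inequality (Lemma~\ref{l:Caccippoli}) with the choice $g = P_t f$ and the cut-off functions $\eta = \eta_k$ coming from the completeness assumption, and then let $k\to\infty$. For fixed $t\ge 0$ and $f\in C_0(V)$, Proposition~\ref{p:basic pt} gives $g=P_tf\in\ell^2_m\cap D(L^{(D)})$, and since the graph is complete we have $L=\Delta$ on $D(Q)$, so $\Delta g = L^{(D)}P_t f = P_t^{(D)} L^{(D)} f = P_t(\Delta f)$ by Proposition~\ref{p:exchange}; in particular $h:=\Delta g\in\ell^2_m$ with $\|h\|_{\ell^2_m}=\|P_t\Delta f\|_{\ell^2_m}\le\|\Delta f\|_{\ell^2_m}$. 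The equality $\Delta g = h$ means both $\Delta g\ge h$ and $\Delta(-g)\ge -h$ hold, so Lemma~\ref{l:Caccippoli} applies (we only need one direction here). This yields, for every $k$,
\[
\|\Gamma(P_t f)\,\eta_k^2\|_{\ell^1_m}\le C\bigl(\|\Gamma(\eta_k)\,(P_tf)^2\|_{\ell^1_m}+\|(P_tf)\,h\,\eta_k^2\|_{\ell^1_m}\bigr).
\]

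Next I would bound the two terms on the right uniformly in $k$. For the first term, the completeness condition \eqref{d:complete} gives $\Gamma(\eta_k)\le 1/k$ pointwise, hence
\[
\|\Gamma(\eta_k)\,(P_tf)^2\|_{\ell^1_m}\le \frac1k\sum_x (P_tf)^2(x)m(x)=\frac1k\|P_tf\|_{\ell^2_m}^2\le\frac1k\|f\|_{\ell^2_m}^2\longrightarrow 0.
\]
For the second term, since $0\le\eta_k\le 1$, Cauchy--Schwarz in $\ell^2_m$ gives
\[
\|(P_tf)\,h\,\eta_k^2\|_{\ell^1_m}\le\|P_tf\|_{\ell^2_m}\,\|h\|_{\ell^2_m}\le\|f\|_{\ell^2_m}\,\|\Delta f\|_{\ell^2_m},
\]
a bound independent of $k$. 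Combining, $\|\Gamma(P_tf)\,\eta_k^2\|_{\ell^1_m}\le C(\tfrac1k\|f\|_{\ell^2_m}^2+\|f\|_{\ell^2_m}\|\Delta f\|_{\ell^2_m})$. Since $\{\eta_k\}$ is nondecreasing with $\eta_k\to\mathds{1}$ pointwise, $\eta_k^2\uparrow 1$, and $\Gamma(P_tf)\ge 0$, the monotone convergence theorem lets me pass to the limit on the left: $\|\Gamma(P_tf)\|_{\ell^1_m}=\lim_k\|\Gamma(P_tf)\,\eta_k^2\|_{\ell^1_m}\le C\|f\|_{\ell^2_m}\|\Delta f\|_{\ell^2_m}$, which (together with $Q(P_tf)=Q^{(N)}(P_tf)=\|\Gamma(P_tf)\|_{\ell^1_m}$ for the complete graph) is exactly the claimed estimate, with the same $C$ as in Lemma~\ref{l:Caccippoli}.

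The main technical point to be careful about is the legitimacy of the manipulations underlying Lemma~\ref{l:Caccippoli} in this application: the lemma is stated for $\eta\in C_0(V)$, so Green's formula is used only against the finitely supported $\eta_k^2 g$, which is fine; but one should check that all the intermediate sums (e.g. $\sum_{x,y}|\nabla_{xy}g|^2\eta_k^2(x)\mu_{xy}$) are finite so the cancellation step is valid --- this follows because $g=P_tf\in\ell^2_m$ and $\eta_k$ has finite support in a locally finite graph, so only finitely many edges contribute. A secondary subtlety is ensuring $\Delta(P_tf)=P_t(\Delta f)$ rather than just $L^{(D)}P_tf=P_tL^{(D)}f$; this is where completeness is used a second time, via $L=\Delta$ on $D(Q)$ established right after Lemma~\ref{l:Qdensity}. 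No non-degeneracy of $m$ is needed at this stage. I expect the limiting argument and the uniform bounds to be routine once the Caccioppoli inequality is in hand; the only real care is bookkeeping the summability and the identification of $\Delta P_t f$.
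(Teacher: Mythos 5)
Your proposal is correct and is essentially identical to the paper's proof: apply the Caccioppoli inequality with $g=P_tf$, $h=\Delta P_tf=P_t(\Delta f)$ (the paper writes $h=\frac{d}{dt}P_tf$, which is the same function), and $\eta=\eta_k$, bound the two terms via $\Gamma(\eta_k)\le 1/k$ and Cauchy--Schwarz together with the $\ell^2$-contractivity of $P_t$, and conclude by monotone convergence. The additional summability checks you flag are sound but do not change the argument.
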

\begin{proof}
  For $f\in C_0(V),$ the local finiteness of the graph implies that $\Delta f\in C_0(V).$
  By the completeness of the graph, let $\eta_k\in C_0(V)$ satisfy \eqref{d:complete}. Since $P_tf$ satisfies the equation $\frac{d}{dt} P_tf=\Delta P_t f$ for any $t>0,$ applying the Caccippoli inequality in Lemma \ref{l:Caccippoli} with $g=P_t f,$ $h=\frac{d}{dt} P_t f$ and $\eta=\eta_k,$ we have
      \begin{eqnarray*}\|\Gamma(P_t f) \eta_k^2\|_{\ell^1_m}&\leq& C(\|\Gamma(\eta_k) |P_t f|^2\|_{\ell^1_m}+\|P_t f \cdot\frac{d}{dt} P_t f\cdot \eta_k^2\|_{\ell^1_m})\\
      &\leq& C\left(\frac1k\| P_t f\|_{\ell^2_m}^{2}+\|P_t f\|_{\ell^2_m}\|\frac{d}{dt} P_t f\|_{\ell^2_m}\right).\end{eqnarray*}
By Proposition \ref{p:basic pt}, $$\|P_t f\|_{\ell^2_m}\leq \|f\|_{\ell^2_m}$$ and by Proposition \ref{p:exchange} and the equation \eqref{e:generator action},
$$\|\frac{d}{dt} P_t f\|_{\ell^2_m}=\|\Delta P_t f\|_{\ell^2_m}=\|P_t\Delta f\|_{\ell^2_m}\leq \|\Delta f\|_{\ell^2_m}. $$ Hence $$\|\Gamma(P_t f) \eta_k^2\|_{\ell^1_m}\leq C\left(\frac1k\|f\|_{\ell^2_m}^2+\|f\|_{\ell^2_m}\|\Delta f\|_{\ell^2_m}\right).$$ By passing to the limit, $k\to \infty,$ the monotone convergence theorem yields the lemma.
\end{proof}

The following result is an improved estimate of the previous lemma which will be useful in further applications.
\begin{lemma}\label{lem:max time estimate}
  Let $(V,E,m,\mu)$ be a complete graph. Then for any $f\in C_0(V)$ and $T>0,$ we have $\max_{[0,T]}\Gamma(P_t f)\in\ell^1_m$ and
  \begin{equation}\label{eq:max time}\left\|\max_{[0,T]}\Gamma(P_t f)\right\|_{\ell^1_m}\leq C_1(T,f), \end{equation} where $C_1(T,f)$ is a constant depending on $T$ and $f.$ Moveover, $$\max_{[0,T]}|\Gamma(P_t f,\frac{d}{dt} P_t f)|\in\ell^1_m\quad\mathrm{and}$$
  \begin{equation}\label{eq:mixed estimate}
    \left\|\max_{[0,T]}|\Gamma(P_t f,\frac{d}{dt} P_t f)|\right\|_{\ell^1_m}=\left\|\max_{[0,T]}|\Gamma(P_t f,\Delta P_t f)|\right\|_{\ell^1_m}\leq C_2(T,f).
  \end{equation}
\end{lemma}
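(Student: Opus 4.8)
The plan is to bootstrap from Lemma~\ref{lem:energy estimate}, which controls $\|\Gamma(P_tf)\|_{\ell^1_m}$ uniformly in $t$, to a bound on the $\ell^1_m$-norm of the \emph{time-maximum}. The key device is the fundamental theorem of calculus in $t$: for fixed $x\in V$,
\[
\Gamma(P_tf)(x)=\Gamma(P_sf)(x)+\int_s^t \frac{d}{d\tau}\Gamma(P_\tau f)(x)\,d\tau,
\]
and since $\Gamma(\cdot)(x)$ is a quadratic form in the (finitely many, for fixed $x$, by local finiteness) values of its argument, one computes $\frac{d}{d\tau}\Gamma(P_\tau f)(x)=2\,\Gamma\!\big(P_\tau f,\tfrac{d}{d\tau}P_\tau f\big)(x)=2\,\Gamma(P_\tau f,\Delta P_\tau f)(x)$, using $\tfrac{d}{d\tau}P_\tau f=\Delta P_\tau f$ (valid pointwise for $f\in C_0(V)$ by Propositions~\ref{p:basic pt} and~\ref{p:exchange} together with \eqref{e:generator action}). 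Hence for every $t\in[0,T]$,
\[
\Gamma(P_tf)(x)\le \frac1T\int_0^T\Gamma(P_sf)(x)\,ds+2\int_0^T\big|\Gamma(P_\tau f,\Delta P_\tau f)(x)\big|\,d\tau,
\]
where in the first term I have averaged over $s\in[0,T]$ to avoid privileging an endpoint. Taking $\max_{[0,T]}$ and then summing against $m$, Lemma~\ref{lem:energy estimate} handles the first integral (it is $\le CT\|f\|_{\ell^2_m}\|\Delta f\|_{\ell^2_m}$ after summing), so \eqref{eq:max time} will follow once \eqref{eq:mixed estimate} is established; in particular the measurability/finiteness of $\max_{[0,T]}\Gamma(P_tf)$ reduces to that of the mixed term.

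So the heart of the matter is the mixed estimate \eqref{eq:mixed estimate}. Here I would apply Cauchy--Schwarz for the bilinear form $\Gamma$ from \eqref{eq:gamma}: pointwise,
\[
\big|\Gamma(P_\tau f,\Delta P_\tau f)(x)\big|\le \sqrt{\Gamma(P_\tau f)(x)}\,\sqrt{\Gamma(\Delta P_\tau f)(x)}
=\sqrt{\Gamma(P_\tau f)(x)}\,\sqrt{\Gamma(P_\tau(\Delta f))(x)},
\]
using $\Delta P_\tau f=P_\tau\Delta f$. Now $\Delta f\in C_0(V)$ by local finiteness, so $\Gamma(P_\tau(\Delta f))$ is itself of the same type as $\Gamma(P_\tau f)$ but with $f$ replaced by $\Delta f$: Lemma~\ref{lem:energy estimate} gives $\|\Gamma(P_\tau(\Delta f))\|_{\ell^1_m}\le C\|\Delta f\|_{\ell^2_m}\|\Delta^2 f\|_{\ell^2_m}$ uniformly in $\tau$. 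Applying the Cauchy--Schwarz inequality in $\ell^2_m$ to the product of the two square roots, then integrating in $\tau$ over $[0,T]$,
\[
\int_0^T \big\|\Gamma(P_\tau f,\Delta P_\tau f)\big\|_{\ell^1_m}\,d\tau
\le \int_0^T \|\Gamma(P_\tau f)\|_{\ell^1_m}^{1/2}\,\|\Gamma(P_\tau(\Delta f))\|_{\ell^1_m}^{1/2}\,d\tau
\le C\,T\,\big(\|f\|_{\ell^2_m}\|\Delta f\|_{\ell^2_m}\big)^{1/2}\big(\|\Delta f\|_{\ell^2_m}\|\Delta^2 f\|_{\ell^2_m}\big)^{1/2},
\]
which is a finite constant $C_2(T,f)$ of the desired form. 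For the \emph{maximum} over $\tau$ rather than the integral, I repeat the same FTC trick one level up: $\Gamma(P_\tau f,\Delta P_\tau f)(x)-\Gamma(P_\sigma f,\Delta P_\sigma f)(x)$ is the $\tau$-integral of its derivative, which expands (again quadratic in finitely many coordinates) into a sum of terms of the form $\Gamma(P_\cdot(\Delta^j f),P_\cdot(\Delta^k f))$ with $j+k\le 3$; bounding each such term in $\ell^1_m$ by Cauchy--Schwarz and Lemma~\ref{lem:energy estimate} applied to $\Delta^j f,\Delta^k f\in C_0(V)$ gives a uniform-in-$\tau$ bound, hence controls $\max_{[0,T]}$ after averaging the base point $\sigma$ over $[0,T]$ as before.

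The main obstacle I anticipate is \emph{justifying the pointwise differentiation in $t$ and the interchange of $\max_{[0,T]}$ with the summation $\sum_{x\in V}$} — i.e., measurability of $t\mapsto \Gamma(P_tf)(x)$, continuity so that the maximum is attained and equals a countable supremum, and the monotone/dominated convergence needed to pass the $\ell^1_m$-norm inside. For fixed $x$, continuity in $t$ of $P_tf(x)$ (and of $P_t(\Delta^j f)(x)$) follows since $P_t$ is a $C_0$-semigroup on $\ell^2_m$ and evaluation at $x$ is bounded (Proposition~\ref{p:nondegenerate} is not even needed here, only $m(x)>0$), and differentiability in $t$ with derivative $\Delta P_tf(x)=P_t\Delta f(x)$ follows from $f\in D(L^{(D)})$ and Proposition~\ref{p:exchange}; then $\max_{[0,T]}\Gamma(P_tf)(x)$ is a genuine maximum of a continuous function and is a measurable (indeed pointwise-countable-sup) function of $x$, so Tonelli/monotone convergence legitimately moves $\|\cdot\|_{\ell^1_m}$ past it. Once these soft points are in place, the estimates above assemble into \eqref{eq:max time} and \eqref{eq:mixed estimate} with, say, $C_1(T,f)=C T\|f\|_{\ell^2_m}\|\Delta f\|_{\ell^2_m}+2C_2(T,f)$.
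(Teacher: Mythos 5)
Your argument for \eqref{eq:max time} is essentially the paper's: both rest on the Newton--Leibniz formula in time, the identity $\tfrac{d}{d\tau}\Gamma(P_\tau f)=2\Gamma(P_\tau f,P_\tau\Delta f)$ (justified by local finiteness), and the uniform-in-$t$ energy bound of Lemma~\ref{lem:energy estimate} applied to $f$ and $\Delta f$; averaging the base point over $[0,T]$ instead of anchoring at $s=0$ (which gives the term $\|\Gamma(f)\|_{\ell^1_m}$) is a cosmetic difference. Where you diverge is in the logical order and in the treatment of the mixed term: you derive \eqref{eq:max time} from \eqref{eq:mixed estimate} and then prove \eqref{eq:mixed estimate} by a second application of the FTC trick one derivative level up, expanding $\tfrac{d}{d\tau}\Gamma(P_\tau f,P_\tau\Delta f)$ into terms $\Gamma(P_\tau(\Delta^j f),P_\tau(\Delta^k f))$. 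This is valid (and free of circularity, since it only invokes Lemma~\ref{lem:energy estimate} for $\Delta^j f\in C_0(V)$), but it is heavier than necessary: the pointwise inequality \eqref{eq:gamma} gives $|\Gamma(P_tf,P_t\Delta f)|\le\tfrac12\Gamma(P_tf)+\tfrac12\Gamma(P_t\Delta f)$ directly, so \eqref{eq:mixed estimate} follows in one line from \eqref{eq:max time} applied to $f$ and to $\Delta f$ --- which is exactly the paper's route, and also why the paper proves \eqref{eq:max time} first (only the \emph{time integral} of $\|\Gamma(P_\tau f,P_\tau\Delta f)\|_{\ell^1_m}$, not its maximum, is needed there). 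Your handling of the soft points (continuity in $t$ of pointwise evaluations, countable supremum, Tonelli) is correct and slightly more explicit than the paper's.
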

\begin{proof}
The local finiteness yields that $\Delta f\in C_0(V)$ and $\Delta^2 f\in C_0(V)$ for $f\in C_0(V).$

For the first assertion, the Newton-Leibniz formula yields
  \begin{eqnarray*}\Gamma(P_tf)&=&\Gamma(f)+\int_0^T\frac{d}{ds}\Gamma(P_sf)ds\\
  &=&\Gamma(f)+2\int_0^T\Gamma(P_sf,\frac{d}{ds}P_sf)ds\\
  &=&\Gamma(f)+2\int_0^T\Gamma(P_sf,\Delta P_sf)ds\\
  &=&\Gamma(f)+2\int_0^T\Gamma(P_sf,P_s(\Delta f))ds,\end{eqnarray*} where the last equality follows from Proposition \ref{p:exchange}. Hence by the equation \eqref{eq:gamma} and Lemma \ref{lem:energy estimate}
  \begin{eqnarray*}\left\|\max_{[0,T]}\Gamma(P_tf)\right\|_{\ell_m^1}&\leq& \|\Gamma(f)\|_{\ell^1_m}+2\left\|\int_0^T|\Gamma(P_sf,P_s(\Delta f))|ds\right\|_{\ell^1_m}\\
  &\leq&\|\Gamma(f)\|_{\ell^1_m}+\int_0^T(\|\Gamma(P_sf)\|_{\ell^1_m}+\|\Gamma(P_s(\Delta f))\|_{\ell^1_m})ds\\
  &\leq&\|\Gamma(f)\|_{\ell^1_m}+CT\|\Delta f\|_{\ell^2_m}(\|f\|_{\ell^2_m}+\|\Delta^2 f\|_{\ell^2_m})=:C_1(T,f). \end{eqnarray*}

  The second assertion is a direct consequence of the first one. By $\Delta f\in C_0(V)$ and \eqref{eq:gamma},
  \begin{eqnarray*}
    \left\|\max_{[0,T]}|\Gamma(P_t f,\frac{d}{dt} P_t f)|\right\|_{\ell^1_m}&=&\left\|\max_{[0,T]}|\Gamma(P_t f,\Delta P_t f)|\right\|_{\ell^1_m}=\left\|\max_{[0,T]}|\Gamma(P_t f,P_t \Delta f)|\right\|_{\ell^1_m}\\
    &\leq&\frac12\left\|\max_{[0,T]}\Gamma(P_t f)\right\|_{\ell^1_m}+\frac12\left\|\max_{[0,T]}\Gamma(P_t \Delta f)\right\|_{\ell^1_m}\\
    &\leq& \frac12(C_1(T,f)+C_1(T,\Delta f))=:C_2(T,f).
  \end{eqnarray*}

   This proves the lemma.
\end{proof}

Now we can show that the Dirichlet energy, $t\mapsto Q(P_t f)$, decays in time for the semigroup $P_t$ on complete graphs.
\begin{prop}\label{p:Dirichlet energy}Let $(V,E,m,\mu)$ be a complete graph. Then for any $f\in C_0(V),$
  $$Q(P_t f)\leq Q(f),\ \ \ \forall t\geq 0.$$
  Moreover, for any $f\in D(Q),$
  $$Q(P_t f)\leq Q(f),\ \ \ \forall t\geq 0.$$
\end{prop}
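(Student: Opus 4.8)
The plan is to prove the bound first for $f\in C_0(V)$ by deriving an explicit time-integral identity for $Q(P_tf)$, and then to pass to general $f\in D(Q)$ by approximation together with the lower semicontinuity of the closed form.

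Fix $f\in C_0(V)$ and $T>0$. For each vertex $x$ the scalar function $s\mapsto\Gamma(P_sf)(x)$ is $C^1$ on $[0,T]$ with derivative $2\Gamma(P_sf,\Delta P_sf)(x)$, since $\tfrac{d}{ds}P_sf=LP_sf=\Delta P_sf$ in $\ell^2_m$ and evaluation at a fixed vertex is continuous on $\ell^2_m$ (because $m$ has full support). The fundamental theorem of calculus then gives $\Gamma(P_tf)(x)=\Gamma(f)(x)+2\int_0^t\Gamma(P_sf,\Delta P_sf)(x)\,ds$. I would multiply by $m(x)$ and sum over $x$: the interchange of $\sum_x$ and $\int_0^t$ is legitimate by Tonelli's theorem thanks to the bound $\|\max_{[0,T]}|\Gamma(P_sf,\Delta P_sf)|\|_{\ell^1_m}\le C_2(T,f)$ of Lemma~\ref{lem:max time estimate}, and the spatial sum $\sum_x\Gamma(P_sf,\Delta P_sf)(x)m(x)$ equals $Q(P_sf,\Delta P_sf)$, converging absolutely by the Cauchy--Schwarz inequality once one knows $Q(P_sf)<\infty$ and $Q(\Delta P_sf)=Q(P_s\Delta f)<\infty$, which is Lemma~\ref{lem:energy estimate} applied to $f$ and to $\Delta f\in C_0(V)$. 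This yields the identity $Q(P_tf)=Q(f)+2\int_0^tQ(P_sf,\Delta P_sf)\,ds$.

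Next I would evaluate the integrand. Since $\Delta P_sf=P_s\Delta f\in D(L)\subset D(Q)$ and $P_sf\in D(L)$, Green's formula (Lemma~\ref{l:green formula}) gives $Q(P_sf,\Delta P_sf)=Q(\Delta P_sf,P_sf)=-\sum_x\Delta P_sf(x)\,LP_sf(x)\,m(x)=-\|\Delta P_sf\|_{\ell^2_m}^2\le 0$. Substituting, $Q(P_tf)=Q(f)-2\int_0^t\|\Delta P_sf\|_{\ell^2_m}^2\,ds\le Q(f)$ for all $t\ge 0$, which is the assertion for $f\in C_0(V)$. For general $f\in D(Q)$: on a complete graph $Q=Q^{(D)}$, so $C_0(V)$ is $\|\cdot\|_Q$-dense in $D(Q)$ by Lemma~\ref{l:Qdensity}; pick $f_k\in C_0(V)$ with $\|f_k-f\|_Q\to0$, so $f_k\to f$ in $\ell^2_m$ and $Q(f_k)\to Q(f)$. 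As $P_t$ is an $\ell^2_m$-contraction, $P_tf_k\to P_tf$ in $\ell^2_m$; since a closed form is lower semicontinuous along $\ell^2_m$-convergent sequences, $Q(P_tf)\le\liminf_kQ(P_tf_k)\le\liminf_kQ(f_k)=Q(f)$.

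The conceptual content has already been spent in Lemmas~\ref{lem:energy estimate} and~\ref{lem:max time estimate}, so the main thing to watch in the $C_0(V)$ step is that the two exchanges of limits — summation against time integration, and differentiation under the spatial sum — are covered by the uniform $\ell^1_m$-bounds supplied by those lemmas; the rest is routine. A minor point, since the non-degeneracy of $m$ is not assumed here, is that pointwise-in-$t$ differentiability of $s\mapsto P_sf(x)$ must be extracted from the continuity of vertex evaluations on $\ell^2_m$ rather than from Proposition~\ref{p:nondegenerate}.
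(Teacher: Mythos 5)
Your proof is correct and follows essentially the same route as the paper: both rest on Lemma~\ref{lem:max time estimate} to justify the exchange of time-differentiation (or time-integration) with the spatial sum, then Green's formula to identify $Q(P_sf,\Delta P_sf)=-\|\Delta P_sf\|_{\ell^2_m}^2\le 0$, and finally the density of $C_0(V)$ in $D(Q)$ from Lemma~\ref{l:Qdensity} for the general case. The only cosmetic differences are that you phrase the first step as an integral identity rather than a monotonicity of $t\mapsto Q(P_tf)$, and you invoke lower semicontinuity of the closed form in place of the paper's pointwise Fatou argument; both are sound.
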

\begin{proof}
For the first assertion, taking the formal derivative of time in $Q(P_tf)$ for $t>0,$ we get
\begin{equation}\label{eq:eq1}
  \frac{d}{dt}Q(P_t f)= 2\sum_{x\in V}\Gamma(P_t f,\frac{d}{dt} P_t f)(x)m(x).
\end{equation}
Given a fixed $T>t,$ note that for any $t\in [0,T],$
 \begin{eqnarray*}
   |\Gamma(P_t f,\frac{d}{dt} P_t f)(x)|&\leq&\max_{t\in [0,T]}|\Gamma(P_t f,\frac{d}{dt} P_t f)(x)|=: g(x)\in\ell^1_m
    \end{eqnarray*} which follows from \eqref{eq:mixed estimate} in Lemma \ref{lem:max time estimate}. Hence the absolute value of the summand on the right hand side of \eqref{eq:eq1} is uniformly (for $t\in [0,T]$) bounded above by a summable function $g.$ The differentiability theorem yields that $Q(P_t f)$ is differentiable in time and whose the derivative is given by \eqref{eq:eq1}.

 Since $P_t f\in D(L)$ and $\Delta P_t f=P_t \Delta f\in D(Q),$ Green's formula in Lemma \ref{l:green formula} yields \begin{eqnarray*}
  \frac{d}{dt}Q(P_t f)&=& 2\sum_{x\in V}\Gamma(P_t f, \Delta P_t f)(x)m(x)\\
  &=&-2 \sum_{x\in V} |\Delta P_t f (x)|^2m(x)\leq 0.
\end{eqnarray*} This proves the first assertion.

For the second assertion, set $f_k:=f\eta_k$ for $f\in D(Q).$ It follows from the previous result that
$$Q(P_tf_k)\leq Q(f_k).$$ By Lemma \ref{l:Qdensity}, $f_k\to f$ in the norm $\|\cdot\|_{Q}.$ The monotone convergence theorem yields that $$P_tf_k\to P_tf$$ pointwise. By Fatou's lemma, $$Q(P_tf)\leq \liminf_{k\to\infty} Q(P_t f_k)\leq \liminf_{k\to\infty} Q(f_k)=Q(f).$$ This proves the theorem.
\end{proof}


\section{Stochastic completeness}\label{s:stochastic completeness}
\subsection{Gradient bounds and curvature dimension conditions}
The curvature dimension condition implies gradient bounds, see \cite{BakryGentilLedoux} for the case of Markov diffusion semigroups. In fact, they are equivalent on locally finite graphs under some mild assumptions.
\begin{thm}\label{thm:main}
  Let $G=(V,E,m,\mu)$ be a complete graph with a non-degenerate measure $m$, i.e. $\inf_{x\in V}m(x)>0.$ Then the following are equivalent:
  \begin{enumerate}[(a)]
    \item $G$ satisfies $\CCD(K,\infty).$
    \item For any $f\in C_0(V),$
    $$\Gamma(P_t f)\leq e^{-2Kt}P_t(\Gamma (f)).$$
    \item For any $f\in D(Q),$
    $$\Gamma(P_t f)\leq e^{-2Kt}P_t(\Gamma (f)).$$
  \end{enumerate}
\end{thm}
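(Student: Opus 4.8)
The plan is to prove the chain of implications $(a)\Rightarrow(b)\Rightarrow(c)$ and $(c)\Rightarrow(a)$, following the Bakry--\'Emery interpolation strategy but handling the summability issues that arise for unbounded Laplacians.

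\textbf{Step 1: $(a)\Rightarrow(b)$, the interpolation argument.} Fix $f\in C_0(V)$ and $t>0$, and for $s\in[0,t]$ consider the function $\phi(s)=e^{-2Ks}P_s\bigl(\Gamma(P_{t-s}f)\bigr)$, evaluated at a fixed vertex $x$. The heuristic is that $\phi(t)-\phi(0)=\int_0^t\phi'(s)\,ds$ and that
$$\phi'(s)=e^{-2Ks}P_s\Bigl(\,2\Gamma_2(P_{t-s}f)-2K\,\Gamma(P_{t-s}f)\,\Bigr)\ge 0$$
by the $\CCD(K,\infty)$ condition, which gives $\Gamma(P_tf)(x)=\phi(0)\le\phi(t)=e^{-2Kt}P_t(\Gamma(f))(x)$. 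To make this rigorous I need: (i) that $P_{t-s}f$ is smooth enough in $s$ and lies in $C_0(V)$ (true by local finiteness only if $P_tf$ is finitely supported, which it generally is \emph{not}) --- so here I must instead justify differentiating under $P_s$ and interchanging $\frac{d}{ds}$ with the (infinite) sum defining $P_s$, using the uniform-in-$s$ $\ell^1_m$ bounds on $\Gamma(P_{t-s}f)$ and $\Gamma(P_{t-s}f,\Delta P_{t-s}f)$ from Lemma~\ref{lem:max time estimate}; (ii) that the pointwise identity $\frac{d}{ds}\Gamma(P_{t-s}f)=-2\Gamma(P_{t-s}f,\Delta P_{t-s}f)$ combines with $\frac{d}{ds}(P_s g)=P_s(\Delta g)$ to produce the $2\Gamma_2$ term; the nondegeneracy of $m$ plus Proposition~\ref{p:nondegenerate} ensures all the relevant functions are in $\ell^1_m\subset\ell^\infty$ so that $P_s$ acts on them and the manipulations are legitimate. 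The continuity of $\phi$ at the endpoints $s=0,t$ also needs the energy bounds. This is where the hypotheses of the theorem (completeness and nondegeneracy) are really used, and I expect \emph{this is the main obstacle}: the naive diffusion-semigroup computation assumes $\Gamma(P_tf)\in D(Q)$ or even $D(L)$, which is exactly what we cannot verify (Remark~\ref{r:difficulty}); the Caccioppoli-type a priori estimates of Section~\ref{s:Caccioppoli} are the substitute.

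\textbf{Step 2: $(b)\Rightarrow(c)$, a density/approximation argument.} Given $f\in D(Q)$, set $f_k=f\eta_k$ with $\{\eta_k\}$ the completeness exhaustion, so $f_k\in C_0(V)$ and, by Lemma~\ref{l:Qdensity}, $f_k\to f$ in $\|\cdot\|_Q$; in particular $\Gamma(f_k)\to\Gamma(f)$ in $\ell^1_m$ (after passing to a subsequence, pointwise a.e., hence everywhere since $V$ is discrete). By Proposition~\ref{p:basic pt} (contractivity on $\ell^2_m$) and the monotone convergence theorem, $P_tf_k\to P_tf$ pointwise, so $\Gamma(P_tf_k)(x)\to\Gamma(P_tf)(x)$ for each $x$. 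On the right-hand side, $P_t(\Gamma(f_k))\to P_t(\Gamma(f))$ pointwise because $P_t$ is a bounded (sub-Markovian) operator on $\ell^1_m$ and $\Gamma(f_k)\to\Gamma(f)$ in $\ell^1_m$. Passing to the limit in $\Gamma(P_tf_k)\le e^{-2Kt}P_t(\Gamma(f_k))$ gives (c). (One should double-check the pointwise convergence $\Gamma(P_tf_k)\to\Gamma(P_tf)$: since the graph is locally finite, at a fixed $x$ only finitely many edges contribute, so pointwise convergence of $P_tf_k$ at $x$ and its neighbors suffices.)

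\textbf{Step 3: $(c)\Rightarrow(a)$, localization.} This is the easy direction. Fix $x_0\in V$; I want $\Gamma_2(f)(x_0)\ge K\Gamma(f)(x_0)$ for all $f$. It suffices to test on $f\in C_0(V)$, since $\Gamma_2(f)(x_0)$ and $\Gamma(f)(x_0)$ depend only on the values of $f$ on the ball of combinatorial radius $2$ around $x_0$, and any such local data extends to a finitely supported function. For $f\in C_0(V)\subset D(Q)$, apply (c) and subtract $\Gamma(f)$ from both sides, divide by $t>0$, and let $t\downarrow 0$:
$$\frac{\Gamma(P_tf)-\Gamma(f)}{t}\le\frac{e^{-2Kt}P_t(\Gamma(f))-\Gamma(f)}{t}.$$
Evaluated at $x_0$, the left side tends to $\frac{d}{dt}\big|_{t=0^+}\Gamma(P_tf)(x_0)=2\Gamma(f,\Delta f)(x_0)$ and the right side tends to $-2K\Gamma(f)(x_0)+\Delta(\Gamma(f))(x_0)$, both limits being legitimate since at the fixed vertex $x_0$ everything reduces to finite sums of smooth functions of $t$ (again using local finiteness and $P_tf\in C_0$-valued smoothness near $x_0$ --- more precisely $t\mapsto P_tf(y)$ is $C^\infty$ for each $y$, with derivatives given by iterated $\Delta$). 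Rearranging gives $2\Gamma(f,\Delta f)(x_0)\le -2K\Gamma(f)(x_0)+\Delta\Gamma(f)(x_0)$, i.e. $\Gamma_2(f)(x_0)\ge K\Gamma(f)(x_0)$, which is $\CCD(K,\infty)$.

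\textbf{Remark on the hard part.} The genuine content is Step~1. Everything else is soft. In Step~1 the delicate point is upgrading the formal Bakry--\'Emery computation to a rigorous one \emph{without} knowing $\Gamma(P_{t-s}f)\in D(Q)$; the strategy is to keep the base point $x$ fixed (so that after applying $P_s$ we are looking at a single number), differentiate the series $\phi(s)=\sum_y e^{-2Ks}p_s(x,y)\Gamma(P_{t-s}f)(y)m(y)$ term by term, and dominate the differentiated series uniformly in $s\in[\varepsilon,t-\varepsilon]$ using $\|\max_{[0,t]}\Gamma(P_\sigma f)\|_{\ell^1_m}$ and $\|\max_{[0,t]}|\Gamma(P_\sigma f,\Delta P_\sigma f)|\|_{\ell^1_m}$ from Lemma~\ref{lem:max time estimate}, with the sub-Markov property $\sum_y p_s(x,y)m(y)\le 1$ absorbing the heat kernel. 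Then let $\varepsilon\downarrow 0$ using the same maximal bounds for endpoint continuity. The nondegeneracy of $m$ enters to guarantee $\Gamma(P_{t-s}f)$, $\Delta P_{t-s}f$ etc. are bona fide elements of $\ell^1_m$ (indeed of every $\ell^p_m$) so that $P_s$ may be applied and Green's formula invoked.
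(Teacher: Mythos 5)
Your overall architecture coincides with the paper's: the interpolation $s\mapsto P_s\bigl(\Gamma(P_{t-s}f)\bigr)$ evaluated against the heat kernel at a fixed vertex is exactly the paper's function $G(s)=\sum_x\Gamma(P_{t-s}f)(x)\,P_s\zeta(x)\,m(x)$ of Theorem~\ref{thm:monotone} with $\zeta$ a delta function; the differentiability is justified by the same maximal estimates (Lemma~\ref{lem:max time estimate}); and your Steps~2 and~3 are the paper's density and $F'(0)\ge 0$ arguments in a slightly reshuffled order ($(b)\Rightarrow(c)\Rightarrow(a)$ instead of $(b)\Rightarrow(a)$, $(b)\Leftrightarrow(c)$). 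All of that is sound.

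The gap is in the only hard step, and it is precisely the part you defer. After term-by-term differentiation of $\phi(s)=\sum_y p_s(x,y)\,\Gamma(P_{t-s}f)(y)\,m(y)$ you obtain, besides $-2\sum_y p_s(x,y)\,\Gamma(P_{t-s}f,\Delta P_{t-s}f)(y)\,m(y)$, the term $\sum_y \partial_s p_s(x,y)\,\Gamma(P_{t-s}f)(y)\,m(y)$; domination and the sub-Markov property do not convert this into $\sum_y p_s(x,y)\,\Delta\Gamma(P_{t-s}f)(y)\,m(y)$, which is what the pointwise $\CCD(K,\infty)$ inequality requires. Moving $\Delta$ from $p_s(x,\cdot)$ onto $\Gamma(P_{t-s}f)$ takes two summations by parts, and the second needs $\Gamma(P_{t-s}f)\in D(L)$ --- exactly what Remark~\ref{r:difficulty} says is unavailable. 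The paper's resolution has two ingredients your sketch does not supply: (i) Theorem~\ref{thm:w12}, which establishes $\Gamma(P_tf)\in D(Q)$ (form domain only) by applying the Caccioppoli inequality of Lemma~\ref{l:Caccippoli} to $\Gamma(P_tf)$ viewed as a subsolution of the heat equation (Lemma~\ref{l:heat subsolution}; note this step already uses $\CCD(K,\infty)$ and the nondegeneracy of $m$); and (ii) the weak form of the curvature condition, claim \eqref{e:Gs3}: a single summation by parts gives $G'(s)=-2\sum_x\Gamma(P_{t-s}f,\Delta P_{t-s}f)\,P_s\zeta\,m-\sum_x\Gamma\bigl(\Gamma(P_{t-s}f),P_s\zeta\bigr)\,m$, and the inequality $G'(s)\ge 2KG(s)$ is then proved first for test functions $h\in C_0(V)$ (where the second summation by parts is a finite sum and $\CCD(K,\infty)$ applies pointwise) and extended to $h=P_s\zeta\in D(Q)$ by density, which is where (i) is used. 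Your remark that ``the Caccioppoli-type estimates are the substitute'' points in the right direction, but without (i) and (ii) spelled out the claimed inequality $\phi'(s)\ge 0$ is not actually established; also note that $\Gamma(P_tf)\in D(Q)$ is not ``what we cannot verify'' but rather what must be, and is, verified --- it is only membership in $D(L)$ that the argument must avoid.
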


\begin{rem}\label{r:difficulty}
For the case of finite graphs or bounded Laplacians, this result has been proven by \cite{LiuP14,LinLiu}. To illustrate their proof strategy, we consider a finite graph $(V,E,m,\mu)$ satisfying the $\CCD(0,\infty)$ condition.

$(a)\Rightarrow(b):$ For any $f:V\to\R,$ set $\Lambda(s)=P_s(\Gamma(P_{t-s}f)).$
  Then \begin{eqnarray*}
    \Lambda'(s)&=&\Delta P_s(\Gamma(P_{t-s}f))-2P_s(\Gamma(P_{t-s}f,\Delta P_{t-s}f))\\
    &=&P_s(\Delta\Gamma(P_{t-s}f)-2\Gamma(P_{t-s}f,\Delta P_{t-s}f))\geq 0,
  \end{eqnarray*} where the last inequality follows from the $\CCD(0,\infty)$ condition. However, for the case of infinite graphs,  $\Delta P_s(\Gamma(P_{t-s}f)))=P_s\Delta(\Gamma(P_{t-s}f)))$ may not hold since in general we don't know whether $\Gamma(P_{t-s}f)\in D(L).$

  In addition, a strong version of gradient bounds has been proved using the following stronger curvature condition,
  see \cite[equation 3.2.4]{BakryGentilLedoux}
  \begin{equation}\label{e:strong condition}\Gamma(\Gamma(g))\leq 4\Gamma(g)[\Gamma_2(g)-K\Gamma(g)], \ \ \forall g\in C_0(V).\end{equation} However, this stronger curvature condition can never be fulfilled for graphs. In fact, the inequality \eqref{e:strong condition} fails e.g. for $g=\delta_x.$
\end{rem}

\subsection{Curvature dimension conditions and the properties of heat semigroups.}

In order to prove the gradient estimate under the $\CCD(K,\infty)$ condition, we need some lemmata. For graphs satisfying the $\CCD(K,\infty)$ condition, the following lemma states that $\Gamma(P_t f)$ is a subsolution to the heat equation, a standard definition in the theory of PDEs.

\begin{lemma}\label{l:heat subsolution}
  Let $(V,E,m,\mu)$ be a complete graph satisfying the $\CCD(K,\infty)$ condition. Then for any $f\in C_0(V)$
  $$\frac{d}{dt}\Gamma(P_t f)\leq \Delta \Gamma(P_t f)-2K\Gamma(P_t f).$$
\end{lemma}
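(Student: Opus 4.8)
The plan is to establish the differential inequality pointwise at each vertex $x\in V$ by first showing that $t\mapsto\Gamma(P_tf)(x)$ is differentiable and computing its derivative, and then invoking the $\CCD(K,\infty)$ condition together with the identity defining $\Gamma_2$. First I would note that for $f\in C_0(V)$, local finiteness gives $\Delta f,\Delta^2 f\in C_0(V)$, and by Proposition~\ref{p:exchange} and \eqref{e:generator action} we have $\frac{d}{dt}P_tf=\Delta P_tf=P_t\Delta f$, with all relevant functions lying in $\ell^2_m$ (indeed in every $\ell^p_m$ by Proposition~\ref{p:nondegenerate}, though here we only need $\ell^2_m$). Since $\Gamma(g)(x)=\frac{1}{2m(x)}\sum_{y}\mu_{xy}|\nabla_{xy}g|^2$ is a \emph{finite} sum over the neighbors of $x$ (local finiteness), for each fixed $x$ the map $t\mapsto\Gamma(P_tf)(x)$ is a finite sum of products of the functions $t\mapsto P_tf(y)$, each of which is $C^1$ in $t$; hence it is differentiable and
\[
\frac{d}{dt}\Gamma(P_tf)(x)=2\,\Gamma\!\left(P_tf,\tfrac{d}{dt}P_tf\right)(x)=2\,\Gamma(P_tf,\Delta P_tf)(x).
\]

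Next I would rewrite the right-hand side using the defining identity $\Gamma_2(g)=\tfrac12\Delta\Gamma(g)-\Gamma(g,\Delta g)$, valid for any $g:V\to\R$ since all the operations are pointwise finite sums. Taking $g=P_tf$ gives
\[
2\,\Gamma(P_tf,\Delta P_tf)(x)=\Delta\Gamma(P_tf)(x)-2\,\Gamma_2(P_tf)(x).
\]
Now apply the $\CCD(K,\infty)$ condition to the function $g=P_tf$: since the condition $\Gamma_2(g)(x)\geq K\Gamma(g)(x)$ holds for \emph{all} functions $g:V\to\R$ (Definition~\ref{d:curvature dimension} makes no restriction on $g$, and $P_tf$ is a perfectly good real-valued function on $V$), we get $-2\Gamma_2(P_tf)(x)\leq -2K\Gamma(P_tf)(x)$. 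Combining,
\[
\frac{d}{dt}\Gamma(P_tf)(x)=\Delta\Gamma(P_tf)(x)-2\Gamma_2(P_tf)(x)\leq\Delta\Gamma(P_tf)(x)-2K\Gamma(P_tf)(x),
\]
which is exactly the claimed inequality.

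The point to be careful about — and the only real subtlety — is that all the manipulations with $\Delta$, $\Gamma$, and $\Gamma_2$ here are \emph{algebraic identities} between finite sums, valid pointwise for arbitrary functions on a locally finite graph; one does not need $P_tf$ or $\Gamma(P_tf)$ to lie in any operator domain for Lemma~\ref{l:heat subsolution} itself. (That domain issue — whether $\Gamma(P_tf)\in D(L)$, flagged in Remark~\ref{r:difficulty} — is what obstructs the naive semigroup comparison $P_s\Delta\Gamma(P_{t-s}f)=\Delta P_s\Gamma(P_{t-s}f)$, and is precisely why this lemma is stated as a pointwise differential inequality rather than being used directly; it will be handled later via the Caccioppoli estimates of Section~\ref{s:Caccioppoli}.) So the main thing to verify carefully is the justification of differentiating under the finite sum and the interchange $\frac{d}{dt}P_tf=P_t\Delta f$, both of which follow from local finiteness and Proposition~\ref{p:exchange}; once that is in place, the $\CCD(K,\infty)$ inequality applied to $P_tf$ finishes the proof immediately.
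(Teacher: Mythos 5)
Your proof is correct and is exactly the ``direct calculation by means of the $\CCD(K,\infty)$ condition and local finiteness'' that the paper's one-line proof alludes to: differentiate the finite sum defining $\Gamma(P_tf)(x)$ to get $2\Gamma(P_tf,\Delta P_tf)(x)$, rewrite via the identity $2\Gamma(g,\Delta g)=\Delta\Gamma(g)-2\Gamma_2(g)$, and apply the curvature condition to $g=P_tf$. Your remarks on why no domain issues arise here (everything is a pointwise finite sum) correctly identify the only subtlety.
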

\begin{proof}
 This follows from direct calculation by means of the $\CCD(K,\infty)$ condition and local finiteness of the graph.
\end{proof}

\begin{lemma}\label{l:l1 time derivative} Let $(V,E,m,\mu)$ be a complete graph. Then
  for any $f\in C_0(V)$ and $t\geq 0,$ $$\left\|\frac{d}{dt}\Gamma(P_t f)\right\|_{\ell^1_m}\leq 2\sqrt{Q(f)Q(\Delta f)}.$$
\end{lemma}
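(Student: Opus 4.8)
The plan is to differentiate $\Gamma(P_tf)$ in time and estimate the resulting $\ell^1_m$-norm using the identities already established. First I would write, for $f\in C_0(V)$,
\begin{equation*}
\frac{d}{dt}\Gamma(P_tf)=2\Gamma\!\left(P_tf,\frac{d}{dt}P_tf\right)=2\Gamma(P_tf,\Delta P_tf)=2\Gamma(P_tf,P_t\Delta f),
\end{equation*}
where the last step uses Proposition~\ref{p:exchange} together with \eqref{e:generator action} on a complete graph. This pointwise identity is justified because local finiteness forces $\Delta f,\Delta^2 f\in C_0(V)$, so all the functions involved are genuine solutions of the heat equation and the term-by-term differentiation is legitimate.

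Next I would take the $\ell^1_m$-norm and apply the Cauchy--Schwarz inequality \eqref{eq:gamma} in the form $\Gamma(g,h)\le\sqrt{\Gamma(g)\Gamma(h)}$, followed by the $\ell^1$--Cauchy--Schwarz inequality over $V$:
\begin{equation*}
\left\|\frac{d}{dt}\Gamma(P_tf)\right\|_{\ell^1_m}\le 2\sum_{x\in V}\sqrt{\Gamma(P_tf)(x)}\,\sqrt{\Gamma(P_t\Delta f)(x)}\,m(x)\le 2\sqrt{\|\Gamma(P_tf)\|_{\ell^1_m}}\,\sqrt{\|\Gamma(P_t\Delta f)\|_{\ell^1_m}}.
\end{equation*}
Recalling that $\|\Gamma(g)\|_{\ell^1_m}=Q^{(N)}(g)=Q(g)$ on a complete graph, this reads $2\sqrt{Q(P_tf)\,Q(P_t\Delta f)}$.

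Finally I would invoke Proposition~\ref{p:Dirichlet energy}, the monotonicity of the Dirichlet energy under the semigroup, to bound $Q(P_tf)\le Q(f)$ and $Q(P_t\Delta f)\le Q(\Delta f)$ (note $\Delta f\in C_0(V)$, so the first assertion of that proposition applies). Combining these gives
\begin{equation*}
\left\|\frac{d}{dt}\Gamma(P_tf)\right\|_{\ell^1_m}\le 2\sqrt{Q(f)\,Q(\Delta f)},
\end{equation*}
as claimed. I expect the only genuine subtlety to be the justification of differentiating $\Gamma(P_tf)$ pointwise in $t$ and the interchange with summation when passing to $\ell^1_m$; both are controlled by Lemma~\ref{lem:max time estimate}, which provides the uniform-in-$t$ summable majorant $\max_{[0,T]}|\Gamma(P_tf,\Delta P_tf)|\in\ell^1_m$ needed to apply dominated convergence. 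Everything else is a routine chain of Cauchy--Schwarz estimates and the already-proven energy decay.
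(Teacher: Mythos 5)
Your proof is correct and follows essentially the same route as the paper: differentiate $\Gamma(P_tf)$ to get $2\Gamma(P_tf,P_t\Delta f)$ via Proposition~\ref{p:exchange}, apply Cauchy--Schwarz pointwise and then over $V$, and finish with the energy decay of Proposition~\ref{p:Dirichlet energy} applied to $f$ and $\Delta f$. Your additional remark about justifying the term-by-term differentiation via the majorant from Lemma~\ref{lem:max time estimate} is a sensible precaution that the paper leaves implicit, but it does not change the argument.
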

\begin{proof}
  This follows by the computation,
  \begin{eqnarray*}
    &&\left\|\frac{d}{dt}\Gamma(P_t f)\right\|_{\ell^1_m}=2\sum_{x}\left|\Gamma(P_t f, \frac{d}{dt}P_t f)(x)\right|m(x)\\
    &=&2\sum_{x}|\Gamma(P_t f, \Delta P_t f)(x)|m(x)=2\sum_{x}|\Gamma(P_t f, P_t \Delta f )(x)|m(x)\\
    &\leq&2 \sqrt{\sum_{x}\Gamma(P_t f)m(x)\sum_{x}\Gamma(P_t \Delta f)m(x)}\\
    &\leq &2 \sqrt{\sum_{x}\Gamma(f)m(x)\sum_{x}\Gamma(\Delta f)m(x)}<\infty,
  \end{eqnarray*} where we used Proposition \ref{p:exchange} for $f\in C_0(V)$ in the third equality and Proposition \ref{p:Dirichlet energy} for $f,\Delta f\in C_0(V)$ in the last one.
\end{proof}

For complete graphs satisfying the $\CCD(K,\infty)$ condition, we have higher summability of the solutions to heat equations.
\begin{thm}\label{thm:w12}
  Let $G=(V,E,m,\mu)$ be a complete graph with a non-degenerate measure $m$. If $G$ satisfies the $\CCD(K,\infty)$ condition, then for any $f\in C_0(V)$ and $t\geq 0,$
  $$\Gamma (P_tf)\in D(Q).$$
\end{thm}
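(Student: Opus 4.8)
The plan is to show that $\Gamma(P_tf)$, which by Lemma~\ref{lem:max time estimate} already lies in $\ell^1_m$, in fact lies in the form domain $D(Q)$, equivalently has finite Dirichlet energy $Q(\Gamma(P_tf))=\|\Gamma(\Gamma(P_tf))\|_{\ell^1_m}<\infty$. The natural tool is the Caccioppoli inequality of Lemma~\ref{l:Caccippoli} applied to $g=\Gamma(P_tf)$. For this we need a lower bound on $\Delta g$, and here Lemma~\ref{l:heat subsolution} is exactly what is wanted: since $\frac{d}{dt}\Gamma(P_tf)\le \Delta\Gamma(P_tf)-2K\Gamma(P_tf)$, we get $\Delta g \ge h$ with $h := \frac{d}{dt}\Gamma(P_tf)+2K\Gamma(P_tf)$. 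Thus for any $\eta\in C_0(V)$,
\begin{equation*}
\|\Gamma(g)\eta^2\|_{\ell^1_m}\le C\big(\|\Gamma(\eta)g^2\|_{\ell^1_m}+\|gh\eta^2\|_{\ell^1_m}\big).
\end{equation*}
The first term on the right is controlled using completeness: take $\eta=\eta_k$ from \eqref{d:complete}, so $\Gamma(\eta_k)\le 1/k$, giving $\|\Gamma(\eta_k)g^2\|_{\ell^1_m}\le \frac1k\|g^2\|_{\ell^1_m}$, and crucially $g=\Gamma(P_tf)\in\ell^1_m$ together with non-degeneracy of $m$ (Proposition~\ref{p:nondegenerate}) gives $\|g\|_{\ell^\infty}\le\delta^{-1}\|g\|_{\ell^1_m}<\infty$, hence $\|g^2\|_{\ell^1_m}\le \|g\|_{\ell^\infty}\|g\|_{\ell^1_m}<\infty$. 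So the first term tends to $0$ as $k\to\infty$.

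For the second term $\|g h\eta_k^2\|_{\ell^1_m}$, bound $\eta_k^2\le 1$ and estimate $\|gh\|_{\ell^1_m}\le \|g\|_{\ell^\infty}\|h\|_{\ell^1_m}$. Now $h=\frac{d}{dt}\Gamma(P_tf)+2K\Gamma(P_tf)$: the second summand is in $\ell^1_m$ since $\Gamma(P_tf)\in\ell^1_m$, and the first is in $\ell^1_m$ with $\|\frac{d}{dt}\Gamma(P_tf)\|_{\ell^1_m}\le 2\sqrt{Q(f)Q(\Delta f)}$ by Lemma~\ref{l:l1 time derivative}. So $\|h\|_{\ell^1_m}<\infty$, and $\|g\|_{\ell^\infty}<\infty$ as above; thus the right-hand side of the Caccioppoli inequality is bounded uniformly in $k$ (indeed its $\limsup$ over $k$ is $\le C\|g\|_{\ell^\infty}\|h\|_{\ell^1_m}$). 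Finally, $\Gamma(g)\eta_k^2$ increases pointwise to $\Gamma(g)$ along the nondecreasing sequence $\eta_k\uparrow\mathds 1$, so the monotone convergence theorem gives $\|\Gamma(g)\|_{\ell^1_m}=\lim_k\|\Gamma(g)\eta_k^2\|_{\ell^1_m}\le C\|g\|_{\ell^\infty}\|h\|_{\ell^1_m}<\infty$. Hence $Q^{(N)}(\Gamma(P_tf))<\infty$, and since $\Gamma(P_tf)\in\ell^1_m\subset\ell^2_m$ (again non-degeneracy), we conclude $\Gamma(P_tf)\in D(Q^{(N)})=D(Q)$, using $Q^{(N)}=Q^{(D)}$ on complete graphs (Lemma~\ref{l:Qdensity}).

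The step I expect to be the main obstacle is verifying rigorously that the Caccioppoli inequality applies, i.e. that all the manipulations in its proof (Green's formula, rearrangements of sums) are legitimate with $g=\Gamma(P_tf)$ — one must know a priori that $g\in\ell^1_m$ and, to apply Green's formula cleanly, that the relevant sums converge absolutely; this is where non-degeneracy of $m$ (yielding $g\in\ell^\infty$ and the embedding $\ell^1_m\hookrightarrow\ell^2_m$) and the a priori bounds from Lemma~\ref{lem:max time estimate} and Lemma~\ref{l:l1 time derivative} are doing the real work. A secondary technical point is justifying that $h$, defined through the time-derivative $\frac{d}{dt}\Gamma(P_tf)$, is a genuine pointwise function for fixed $t$ satisfying $\Delta g\ge h$ pointwise — this is precisely the content of Lemma~\ref{l:heat subsolution}, so it can be quoted directly.
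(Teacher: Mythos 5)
Your proposal is correct and follows essentially the same route as the paper: both apply the Caccioppoli inequality to $g=\Gamma(P_tf)$ with $h=\frac{d}{dt}\Gamma(P_tf)+2K\Gamma(P_tf)$ coming from Lemma~\ref{l:heat subsolution}, control the two right-hand terms via non-degeneracy of $m$ (to get $g\in\ell^\infty\cap\ell^2_m$ from $g\in\ell^1_m$) and Lemma~\ref{l:l1 time derivative}, and then pass to the limit in $k$ (the paper uses Fatou where you use monotone convergence, an immaterial difference).
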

\begin{proof}
   From the proof of Proposition \ref{p:Dirichlet energy}, $\Gamma(P_t f)\in \ell^1(V,m).$ Hence by the nondenegerancy of $m,$ $\Gamma(P_t f)\in \ell^2(V,m).$ It suffices to prove that $Q(\Gamma(P_t f))<\infty.$

  Let $\{\eta_k\}$ be the sequence in \eqref{d:complete} by the completeness of the graph.  Note that Lemma \ref{l:heat subsolution} implies that $\Gamma(P_t f)$ is a subsolution to the heat equation.
  Applying the Caccioppoli inequality \eqref{e:caccioppoli} with $g=\Gamma(P_t f),$ $h=\frac{d}{dt} g+2K g$ and $\eta=\eta_k,$
  we get \begin{eqnarray*}
      \|\Gamma(g) \eta_k^2\|_{\ell^1_m}&\leq& C(\|\Gamma(\eta_k) g^2\|_{\ell^1_m}+\|g(\frac{d}{dt} g+2K g) \eta_k^2\|_{\ell^1_m})\\
      &\leq & C\left(\frac{1}{k}\|g\|_{\ell^2_m}^2+\|g\frac{d}{dt} g\|_{\ell^1_m}+2|K|\cdot\|g\|_{\ell^2_m}^2\right)\\
      &\leq & C(K)(\|g\|_{\ell^2_m}^2+\|g\frac{d}{dt} g\|_{\ell^1_m})\\
       &=&I+II,
  \end{eqnarray*} where the constant $C(K)$ only depends on $K.$
  By the assumption that $m$ is non-degenerate, Propositions \ref{p:nondegenerate} and
  \ref{p:Dirichlet energy} yield that
  $$I\leq C\|\Gamma(P_tf)\|_{\ell^1_m}^2\leq C\|\Gamma(f)\|_{\ell^1_m}^2<\infty.$$

  For the other term, noting that $\|g\|_{\ell^\infty}\leq C\|g\|_{\ell^1_m},$ by Lemma \ref{l:l1 time derivative}, we have
  \begin{eqnarray*}
    II&\leq& C\|g\|_{\ell^\infty}\|\frac{d}{dt}g\|_{\ell^1_m}\\
    &\leq& C \|g\|_{\ell^1_m}\|\frac{d}{dt}g\|_{\ell^1_m}<\infty.
  \end{eqnarray*} Thus, $\|\Gamma(g) \eta_k^2\|_{\ell^1_m}\leq C<\infty$ where the right hand side is independent of $k.$ By passing to the limit, $k\to \infty,$ Fatou's lemma yields that
  $$\|\Gamma(\Gamma(P_t f))\|_{\ell^1_m}\leq \liminf_{k\to\infty}\|\Gamma(\Gamma(P_t f)) \eta_k^2\|_{\ell^1_m}\leq C.$$ This proves the theorem.

\end{proof}

\subsection{The proofs of main theorems}
\begin{thm}\label{thm:monotone}
  Let $(V,E,m,\mu)$ be a complete graph with a non-degenerate measure $m$ and satisfying the $\CCD(K,\infty)$ condition. For any $f\in C_0(V),$ $0\leq \zeta\in C_0(V)$ and $t>0,$ the following function
  $$s\mapsto G(s):=\sum_{x\in V}\Gamma(P_{t-s}f)(x)P_s \zeta(x)m(x)$$ satisfies
  $$G'(s)\geq 2K G(s),\quad\quad\quad 0<s<t.$$
\end{thm}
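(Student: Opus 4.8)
The plan is to differentiate $G(s)$ in $s$ and use the subsolution property of $\Gamma(P_\cdot f)$ together with Green's formula. Writing $u_s := P_{t-s}f$ and noting $\frac{d}{ds}u_s = -\Delta u_s$, a formal computation gives
\[
G'(s) = \sum_{x\in V}\Big(\tfrac{d}{ds}\Gamma(u_s)\Big)(x)\,P_s\zeta(x)\,m(x) + \sum_{x\in V}\Gamma(u_s)(x)\,\Delta P_s\zeta(x)\,m(x).
\]
By Lemma~\ref{l:heat subsolution} (applied with time reversed, so the sign of the $\frac{d}{dt}$ term flips), $\tfrac{d}{ds}\Gamma(u_s) \geq -\Delta\Gamma(u_s) + 2K\Gamma(u_s)$ pointwise. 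Substituting this bound into the first sum and then using Green's formula (Lemma~\ref{l:green formula}) to move the Laplacian between $\Gamma(u_s)$ and $P_s\zeta$, the two second-order terms cancel and one is left with $G'(s)\geq 2K\,G(s)$.

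First I would justify that $G$ is differentiable and that the differentiation may be carried out term by term under the sum. For the time-derivative of $s\mapsto P_s\zeta$ this is standard since $\zeta\in C_0(V)$ and the graph is locally finite, so $\Delta P_s\zeta = P_s\Delta\zeta$ is controlled in $\ell^1_m$ by $\|\Delta\zeta\|_{\ell^2_m}$-type bounds via Proposition~\ref{p:exchange} and Proposition~\ref{p:basic pt}; moreover $\Gamma(u_s)\in\ell^\infty$ uniformly on compact $s$-intervals by the non-degeneracy of $m$ together with Lemma~\ref{lem:max time estimate}. For the $\frac{d}{ds}\Gamma(u_s)$ term, Lemma~\ref{l:l1 time derivative} gives a uniform $\ell^1_m$-bound on $\frac{d}{ds}\Gamma(u_s)$ on $[0,t]$, while $P_s\zeta\in\ell^\infty$ is bounded; this provides the dominating summable function needed for the differentiation theorem. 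Combining, $G'(s)$ exists and equals the displayed sum.

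The key step after that is the integration by parts. I would use Green's formula in the form
\[
\sum_{x}\Gamma(u_s)(x)\,\Delta P_s\zeta(x)\,m(x) = -Q(\Gamma(u_s),P_s\zeta) = \sum_{x}\Delta\Gamma(u_s)(x)\,P_s\zeta(x)\,m(x),
\]
which requires $\Gamma(u_s)\in D(Q)$ — precisely what Theorem~\ref{thm:w12} provides under the $\CCD(K,\infty)$ condition and non-degeneracy of $m$ — and $P_s\zeta\in D(L)$, which holds by Proposition~\ref{p:basic pt}. With this identity the term $\sum_x\Delta\Gamma(u_s)P_s\zeta\,m$ coming from the subsolution inequality cancels exactly against $\sum_x\Gamma(u_s)\Delta P_s\zeta\,m$, and the leftover is $2K\sum_x\Gamma(u_s)P_s\zeta\,m = 2K\,G(s)$, using $P_s\zeta\geq 0$ and $\Gamma(u_s)\geq 0$ to preserve the direction of the inequality throughout.

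The main obstacle is the integrability bookkeeping needed to make Green's formula and the term-by-term differentiation legitimate in the infinite, unbounded-Laplacian setting — in particular verifying $\Gamma(u_s)\in D(Q)$ (hence invoking Theorem~\ref{thm:w12}) and producing uniform-in-$s$ summable dominating functions. Once $\Gamma(P_{t-s}f)\in D(Q)$ is in hand, the algebra is the same cancellation as in the classical Bakry--\'Emery argument recalled in Remark~\ref{r:difficulty}; the whole point of the preceding section is to supply exactly this regularity so that the formal computation becomes rigorous.
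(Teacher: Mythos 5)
Your overall strategy---differentiate $G$, invoke the subsolution property coming from $\CCD(K,\infty)$, and integrate by parts against $P_s\zeta$---is the same as the paper's, and your justification of the term-by-term differentiation via Lemma~\ref{lem:max time estimate} matches the paper's. The gap is in the key cancellation. Writing $u_s:=P_{t-s}f$ as you do, you assert
\[
\sum_{x}\Gamma(u_s)(x)\,\Delta P_s\zeta(x)\,m(x) \;=\; -Q(\Gamma(u_s),P_s\zeta) \;=\; \sum_{x}\Delta\Gamma(u_s)(x)\,P_s\zeta(x)\,m(x).
\]
The first equality is legitimate: it is Green's formula with $\Gamma(u_s)\in D(Q)$ (Theorem~\ref{thm:w12}) and $P_s\zeta\in D(L)$. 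The second equality is Green's formula with the roles reversed, and it requires $\Gamma(u_s)\in D(L)$---precisely the regularity that is \emph{not} available and that Remark~\ref{r:difficulty} singles out as the essential obstruction for unbounded Laplacians; Theorem~\ref{thm:w12} only places $\Gamma(u_s)$ in the form domain, not in the domain of the generator. Relatedly, when you substitute the pointwise bound $\frac{d}{ds}\Gamma(u_s)\geq-\Delta\Gamma(u_s)+2K\Gamma(u_s)$ into the sum against $P_s\zeta$, you need $\sum_x|\Delta\Gamma(u_s)(x)|\,P_s\zeta(x)\,m(x)<\infty$ for the resulting expression to make sense; only the negative part of $\Delta\Gamma(u_s)$ is controlled in $\ell^1_m$ by the subsolution inequality, so this sum could a priori be $+\infty$ and the intended cancellation would be an $\infty-\infty$.

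The paper avoids ever pairing $\Delta\Gamma(u_s)$ with the non-compactly-supported function $P_s\zeta$. It keeps the second term of $G'(s)$ in the form $-\sum_x\Gamma(\Gamma(u_s),P_s\zeta)\,m$, which is well defined since both entries lie in $D(Q)$, and then proves the inequality \eqref{e:Gs3},
\[
-2\sum_{x}\Gamma(u_s,\Delta u_s)\,h\,m-\sum_{x}\Gamma(\Gamma(u_s),h)\,m\;\geq\; 2K\sum_{x}\Gamma(u_s)\,h\,m,
\]
first for $0\leq h\in C_0(V)$---where the summation by parts back to $\Delta\Gamma(u_s)$ is a finite-sum manipulation and the $\CCD(K,\infty)$ condition can be applied pointwise---and then for general $0\leq h\in D(Q)$ by approximating $h$ with $h\eta_k$ and passing to the limit. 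Taking $h=P_s\zeta$ concludes. Your argument becomes correct once you replace the second application of Green's formula by exactly this test-function-plus-density step.
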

\begin{proof}
  First, we show that $G(s)$ is differentiable in $s\in(0,t)$. Without loss of generality, we assume that $\epsilon<s<t-\epsilon$ for some $\epsilon>0.$ Taking the formal derivative of $G(s)$ in $s,$ we get
  \begin{equation}\label{e:Gs1}
    -2\sum_{x}\Gamma(P_{t-s}f,\Delta P_{t-s}f)(x)P_s\zeta(x)m(x)+\sum_{x}\Gamma(P_{t-s}f)(x)\Delta (P_{s}\zeta)(x)m(x)
  \end{equation} This formal derivative is, in fact, the derivative of $G(s)$ if one can show that the absolute values of summands are uniformly (in $s$) controlled by summable functions. For the first term in \eqref{e:Gs1}, note that $\|P_s \zeta\|_{\ell^{\infty}}\leq \|\zeta\|_{\ell^{\infty}}<\infty.$ Then the equation \eqref{eq:mixed estimate} in Lemma \ref{lem:max time estimate} yields that for any $s\in (\epsilon, t-\epsilon)$
  \begin{eqnarray*}
    &&2|\Gamma(P_{t-s}f,\Delta P_{t-s}f)(x)|P_s\zeta(x)\leq\sup_{s\in (\epsilon,t-\epsilon)}2|\Gamma(P_{t-s}f,\Delta P_{t-s}f)(x)|P_s\zeta(x)\\&\leq& 2\|\zeta\|_{\ell^\infty}\sup_{s\in (\epsilon,t-\epsilon)} |\Gamma(P_{s}f,\Delta P_{s}f)(x)|=:g(x)\in\ell^1_m.
      \end{eqnarray*}
  For the second term in \eqref{e:Gs1}, the equation \eqref{eq:max time} in Lemma \ref{lem:max time estimate} implies that for any $s\in (\epsilon, t-\epsilon)$
  \begin{eqnarray*}
    \Gamma(P_{t-s}f)(x)|\Delta (P_{s}\zeta)(x)|&\leq&\sup_{s\in (\epsilon,t-\epsilon)}\Gamma(P_{t-s}f)(x)|\Delta (P_{s}\zeta)(x)|\\
    &=&\sup_{s\in (\epsilon,t-\epsilon)}\Gamma(P_{t-s}f)(x)|P_{s}\Delta \zeta(x)|\\
    &\leq& \|\Delta \zeta\|_{\ell^\infty}\sup_{s\in (\epsilon,t-\epsilon)}\Gamma(P_{s}f)(x)=:h(x)\in\ell^1_m.
  \end{eqnarray*} Since $g+h\in \ell^1_m$ which is independent of $s\in(\epsilon,t-\epsilon),$ the differentiability theorem yields that $G(s)$ is differentiable and its derivative equals to \eqref{e:Gs1}. Note that Theorem \ref{thm:w12} and Proposition \ref{p:basic pt} yield $\Gamma(P_{t-s}f)\in D(Q)$ and $P_s\zeta\in D(L)\subset D(Q).$ Hence, using Green's formula \eqref{e:green formula} in Lemma \ref{l:green formula}, we obtain that
  \begin{equation}\label{e:Gs2}G'(s)=-2\sum_{x}\Gamma(P_{t-s}f,\Delta P_{t-s}f)(x)P_s\zeta(x)m(x)-\sum_{x}\Gamma(\Gamma(P_{t-s}f),P_{s}\zeta)(x)m(x).\end{equation}

  We claim that for any $0\leq h\in D(Q),$ \begin{eqnarray}\label{e:Gs3}
    &&-2\sum_{x}\Gamma(P_{t-s}f,\Delta P_{t-s}f)(x)h(x)m(x)-\sum_{x}\Gamma(\Gamma(P_{t-s}f),h)(x)m(x)\\
    &\geq& 2K\sum_{x}\Gamma(P_{t-s}f)h(x)m(x).\nonumber
  \end{eqnarray} Once this claim is verified, by applying $h=P_{s}\zeta$ in \eqref{e:Gs3} and the self-adjointness of operators $P_t,$ we can prove the theorem. This claim can be proved by a density argument. Firstly, the $\CCD(K,\infty)$ condition yields that \eqref{e:Gs3} holds for $0\leq h\in C_0(V)$: In fact, by Green's formula for $h\in C_0(V)$,
  \begin{eqnarray*}
    &&-2\sum_{x}\Gamma(P_{t-s}f,\Delta P_{t-s}f)(x)h(x)m(x)-\sum_{x}\Gamma(\Gamma(P_{t-s}f),h)(x)m(x)\\
    &=&-2\sum_{x}\Gamma(P_{t-s}f,\Delta P_{t-s}f)(x)h(x)m(x)+\sum_{x}\Delta(\Gamma(P_{t-s}f))(x)h(x)m(x)\\
    &\geq& 2K\sum_{x}\Gamma(P_{t-s}f)h(x)m(x),
  \end{eqnarray*} where in the last inequality we used the $\CCD(K,\infty)$ condition.

For general $0\leq h\in D(Q),$ set $h_k=h\eta_k$ where $\{\eta_k\}$ is defined in \eqref{d:complete}. It is obvious that $0\leq h_k\in C_0(V).$ Note that Lemma \ref{lem:energy estimate} and Theorem~\ref{thm:w12} yield that $\Gamma(P_{t-s}f,\Delta P_{t-s}f), \Gamma(P_{t-s}f)\in \ell^1_m$ and  $\Gamma(P_{t-s}f)\in D(Q).$ Hence applying \eqref{e:Gs3} for $h_k,$ passing to the limit, $k\to \infty,$ we prove the theorem.

\end{proof}

Now we can prove the gradient bounds of heat semigroups under the $\CCD(K,\infty)$ condition.
\begin{proof}[Proof of Theorem \ref{thm:main}]
  $(a)\Rightarrow (b):$ Using the same notation as in Theorem \ref{thm:monotone}, we get
  $$G'(s)\geq 2KG(s).$$ Hence $G(s)\geq e^{2Ks}G(0).$ Since $P_s$ is a self-adjoint operator on $\ell^2_m,$
  $$G(s)=\sum_{x\in V}P_s(\Gamma(P_{t-s}f))(x)\zeta(x)m(x).$$ By choosing delta functions, such as $\zeta(x)=\delta_y(x)$ ($y\in V$), we prove the theorem.

  $(b)\Rightarrow (a):$ Fix a vertex $x\in V.$ By $(b),$ $$F(t):=e^{-2Kt}P_t(\Gamma (f))(x)-\Gamma(P_t f)(x)\geq 0.$$ It is easy to see that $F(t)$ is differentiable and $F'(0)\geq 0.$ Note that $$\left.\frac{d}{dt}\right|_{t=0}P_t(\Gamma (f))(x)=\Delta P_t(\Gamma(f))(x)|_{t=0}=\Delta (\Gamma (f))(x).$$
  Since the graph is locally finite, $$\left.\frac{d}{dt}\right|_{t=0}\Gamma(P_t f)(x)=2\Gamma(P_tf,\Delta P_tf)(x)|_{t=0}=2\Gamma(f,\Delta f)(x).$$
  This proves the assertion by using $F'(0)\geq 0.$

  $(b)\Leftrightarrow(c):$ This follows from a density argument.
\end{proof}


Now we are ready to prove the analogue to Yau's result \cite{Yau78} on graphs.

 \begin{proof}[Proof of Theorem~\ref{thm:main theorem2}]
   It suffices to prove that $P_t \mathds{1}=\mathds{1}$ where $\mathds{1}$ is the constant function $1$ on $V.$ By completeness, let $\eta_k\in C_0(V)$ satisfy \eqref{d:complete}. The dominated convergence theorem yields that $P_t \eta_k\to P_t \mathds{1}$ pointwise. By the local finiteness of the graph, for any $x\in V$ and $t>0,$
   \begin{eqnarray*}
     \Gamma(P_t\mathds{1})(x)&=&\lim_{k\to \infty} \Gamma(P_t \eta_k)(x)\leq \liminf_{k\to\infty}e^{-2K t}P_t (\Gamma(\eta_k))(x)\\
     &\leq &\liminf_{k\to\infty}e^{-2Kt}\cdot\frac{1}{k}=0.
   \end{eqnarray*} This means that for any $t>0,$ $P_t\mathds{1}$ is a constant function on $V.$ Since the function $P_t\mathds{1}$ is continuous in $t$ pointwise and $P_0\mathds{1}=\mathds{1},$ we get $P_t\mathds{1}=\mathds{1}$ for any $t>0.$ This proves the theorem.
 \end{proof}



{\bf Acknowledgements.} This work was done when the authors were visiting the Shanghai Center for Mathematical Sciences, Fudan University in Summer 2014. They acknowledge the support from SCMS.

B. H. is supported by NSFC, grant no. 11401106. Y. L. is supported by NSFC, grant no. 11271011, the Fundamental Research Funds for the Central Universities and the Research Funds of Renmin University of China($11$XNI$004$).

\bibliography{Stoch}
\bibliographystyle{alpha}

\end{document}